\begin{document}

\title{Iterated Random Functions and Slowly Varying Tails}
\author{Piotr Dyszewski}
\thanks{The author was partially supported by the NCN grant DEC-2012/05/B/ST1/00692}

%\date{\today}
\address{Instytut Matematyczny, Uniwersytet Wrocławski, Plac Grunwaldzki 2/4, 50-384 Wrocław, Poland}
\email{piotr.dyszewski@math.uni.wroc.pl}
\urladdr{www.math.uni.wroc.pl/$\sim$pdysz}

\date{\today}

\keywords{stochastic recursions, random difference equation, stationary distribution, subexponential distributions}
\subjclass[2010]{60H25, 60J10}

\maketitle

%\subjclass[2010]{60H25, 60J10}
\begin{abstract}
   Consider a sequence of i.i.d. random Lipschitz functions $\{\Psi_n\}_{n \geq 0}$. Using this sequence we can define a~Markov chain via the recursive formula
   $R_{n+1} = \Psi_{n+1}(R_n)$. It is a~well known fact that under some mild moment assumptions this Markov chain has a unique stationary distribution.
   We~are interested in the tail behaviour of this distribution in the case when $\Psi_0(t) \approx A_0t+B_0$. We~will show that under subexponential 
   assumptions on the random variable $\log^+(A_0\vee B_0)$ the tail asymptotic in question can be described using the integrated tail function of $\log^+(A_0\vee B_0)$.
   In~particular we will obtain new results for the random difference equation $R_{n+1} = A_{n+1}R_n+B_{n+1}$.
\end{abstract}

\section{Introduction}
   Consider a sequence of independent identically distributed (i.i.d.) random Lipschitz functions $\{\Psi_n\}_{n \geq 0}$, where $\Psi_n \colon \RR \to \RR$ for $n \in \NN$. 
   Using this sequence we can define a~Markov chain via the recursive formula
   \begin{equation}\label{SDS}
      R_{n+1} = \Psi_{n+1}(R_n) \qquad \mbox{for }n \geq 0,
   \end{equation}
   where $R_0 \in \RR$ is arbitrary but independent of the sequence $\{\Psi_n\}_{n \geq 0}$. Put $\Psi=\Psi_0$. 
   We are interested in the existence and properties of the stationary distribution of the Markov chain 
   $\{R_n\}_{n\geq 0}$, that is the solution of the stochastic fixed point equation
   \begin{equation}\label{FixedPoint}
      R \stackrel{d}{=} \Psi(R) \qquad \mbox{$R$ independent of $\Psi$},
   \end{equation}
   where the distribution of random variable $R$ is the stationary distribution of the Markov chain $\{R_n\}_{n \geq 0}$. \bigskip

   The main example, we have in mind, is the random difference equation, where  $\Psi$ is an affine transformation, that is $\Psi_n(t) = A_nt+B_n$ with
   $\{(A_n,B_n)\}_{n\geq 0}$ being an i.i.d. sequence of two-dimensional random vectors. Then the formula \eqref{SDS} can be written as 
   \begin{equation}\label{AnXn+Bn}
      R_{n+1} = A_{n+1}R_n+B_{n+1} \qquad \mbox{for }n \geq 0.
   \end{equation}
   Put $(A,B)=(A_0,B_0)$.
   It is a well known fact that if 
   \begin{equation*}
      \EE[\log|A|]< 0\quad \mbox{and}\quad \EE[\log^+|B|]<\infty,
   \end{equation*} 
   then the Markov chain $\{R_n\}_{n\geq 0}$ given by \eqref{AnXn+Bn} has a~unique stationary 
   distribution which can be represented as 
   the distribution of the random variable
   \begin{equation}\label{PerpetuityDef}
      R = \sum_{n \geq 0}{B_{n+1}\prod_{k=1}^n{A_k} },
   \end{equation}
   for details see \cite{vervaat1979stochastic}. 
   Random variables of this form can be found in analysis of probabilistic algorithms or financial mathematics, where $R$ would be called a~perpetuity. 
   Such random variables occur also in number theory, combinatorics, as a solution to stochastic fixed point equation
   \begin{equation}\label{RDE}
      R \stackrel{d}{=} AR+B \qquad R\mbox{ independent of }(A,B),
   \end{equation}
   atomic cascades, random environment branching processes, exponential functionals of L\'evy processes, Additive Increase Multiplicative Decrease algorithms \cite{guillemin2004}, 
   COGARCH processes \cite{ContinuousTime}, and more.
   A~variety of examples for possible applications of $R$ can be found in  \cite{GoldieGrubel1996, Grey1994, PerpetuitiesandRandomEquations}.\bigskip 

   From the application point of view, the key information is the behaviour of the tail of $R$, that is 
   \begin{equation*}
      \PP[R > x] \qquad \mbox{as } x \to \infty.
   \end{equation*}
   This problem was investigated by various authors, for example by Goldie and Grübel~\cite{GoldieGrubel1996} and in a similar setting by Hitczenko 
   and~Wesołowski~\cite{HitczenkoWesolowski2009}. The first result says that if $B$ is bounded, $\PP[A\in [0,1]]=1$ and the distribution of $A$ 
   behaves like the uniform distribution in the 
   neighborhood of $1$,  then $R$ given by \eqref{PerpetuityDef} has thin tail, more precisely $\log \PP[R \geq x] \sim -c x \log(x)$.
   Recall that for two positive 
   functions $f(\cdot)$ and $g(\cdot)$, by $f(x) \sim g(x)$ we mean that $\lim_{x \to \infty}{f(x)/g(x)} = 1$. In this paper we are only interested in limits as $x \to \infty$,
   so from now we omit the specification of the limit.%\newline 

   There is also the result of Kesten~\cite{Kesten1973} and later on, in the same setting, of Goldie~\cite{Goldie1991}. 
   The~essence of this result is that under Cram\'er's condition, that is if
   $\EE[|A|^{\alpha}]=1$ for some $\alpha>0$ such that $\EE[|B|^{\alpha}] < \infty$, the tail of $R$ is regularly varying, i. e. $\PP[R>x] \sim cx^{-\alpha}$ 
   for some positive and finite 
   constant $c$ and $R$ defined by~\eqref{PerpetuityDef}.%\newline 

   Finally, the result of Grincevičius~\cite{Grin1975}, which was later generalised by Grey~\cite{Grey1994}, states that in the case of positive $A$ 
   if for some $\alpha>0$ we have
   $\EE[A^{\alpha}]<1$ and $\PP[B>x] \sim x^{-\alpha}L(x)$, where $L$ is slowly varying (that is $L(cx) \sim L(x)$ for any positive $c$), then the tail of $R$ is 
   again regularly 
   varying, in fact $\PP[R>x] \sim cx^{-\alpha}L(x)$. Note that in this case the tail of perpetuity $R$ exhibits the same rate of decay as the tail of the input, that is 
   $\PP[R >x] \sim c\PP[B>x]$. \bigskip

   However, in the case when $\PP[A>x]$ or $\PP[B>x]$ is a slowly varying function of $x$, up to our knowledge, little is known about the behaviour of $\PP[R>x]$  as 
   $x \to \infty$. 
   This is the problem we consider in the present paper.\bigskip 

   The case of general fixed point equation \eqref{FixedPoint} was studied by Goldie~\cite{Goldie1991}, where several particular forms 
   of the transformation $\Psi$ were treated.
   Later Mirek~\cite{Mirek2011} found the tail asymptotic of the solution of \eqref{FixedPoint} with $\Psi$ being Lipschitz such that $\Psi(t) \approx \mbox{Lip}(\Psi)t$,
   where ${\rm Lip}(\Psi)$ is the Lipschitz constant. The result says that if $\EE[\log( {\rm Lip}(\Psi) )] < 0$ and 
   $\EE[{\rm Lip}(\Psi)^{\alpha}]=1$ for some $\alpha>0$, 
   then $R$ solving \eqref{FixedPoint}
   exhibits regularly varying tail $\PP[|R|>x] \sim c x^{\alpha}$. Grey~\cite{Grey1994} also treated generalized fixed point equations \eqref{FixedPoint} in 
   the setting introduced by Grincevičius~\cite{Grin1975}.

   It turns out that the assumption $\EE[\log({\rm Lip}(\Psi))]<0 $ is necessary for the existence of the probabilistic 
   solutions of \eqref{FixedPoint}. For the existence and asymptotic behaviour of the invariant measure of the Markov chain \eqref{SDS} in the critical case, 
   that is $\EE[\log({\rm Lip}(\Psi))]=0$, see \cite{BBE1997, Buraczewski2007, BBD2012, BB2014}.\bigskip

   This paper gives an answer to the question about asymptotic of $\PP[R>x]$, where $R$ solves \eqref{FixedPoint}, in the case of slowly varying input.
   Assuming that the Lipschitz function $\Psi$ satisfies
   \begin{equation*}
     At+B-D \leq \Psi(t)\leq At^++B^++D \qquad \mbox{for } t \in \RR,
   \end{equation*}
   with $D>0$ being relatively small and $A>0$, 
   we will show that under subexponential assumptions on the random variable $\log(A \vee B)$ one has 
   \begin{equation*}
      \PP[R > x] \asymp \int_{\log(x)}^{\infty}{\PP[\log(A\vee B) >y] \: \ud y}.
   \end{equation*}
%   where $R$ is the unique solution to \eqref{FixedPoint}.
   Recall that for two positive functions $f(\cdot)$, $g(\cdot)$ by $f(x) \asymp g(x)$ we mean that $g(x)=O(f(x))$ and $f(x) = O(g(x))$. 
   Furthermore, in our setting, the integral expression on the right hand side will be a~slowly varying function of $x$.
   Moreover in several cases we will establish a~precise tail asymptotic of $R$.
   In~order to obtain full description of tail 
   behaviour for the sequence $\{R_n\}_{n \geq 0}$ we will study finite time horizon. We will show that if distribution of $\log(A\vee B)$ is subexponential, then it holds true that
   \begin{equation*}
      \PP[R_n>x] \asymp n \PP[A\vee B>x],
   \end{equation*}
   where $\{R_n\}_{n\geq 0}$ in given by \eqref{SDS}.
   \bigskip

   The main result gives description of tail asymptotic of the solution of~\eqref{RDE} and also
   \begin{equation*}
      R \stackrel{d}{=} AR^++B \qquad R\mbox{ independent of }(A,B),
   \end{equation*}
   which is closely related to the ruin probability, 
   for details see \cite{Collamore2009}. We can also obtain a~description of the solutions to 
   \begin{equation*}
      R \stackrel{d}{=} A_1 |R| + \sqrt{D+A_2R^2}\qquad R \mbox{ independent of } (A_1,A_2,D),
   \end{equation*}
   where $\PP[D>x] = o\left(\PP\left[A_1+\sqrt{A_2} >x\right]\right)$.
   This corresponds to an autoregressive process with ARCH$(1)$ errors, which was described by Borkovec and Klüppelberg \cite{borkovec2001}.
   To find the behaviour of $\PP[|R|>x]$ just take $\Psi(t) = | A_1 t + \sqrt{D+A_2 (t^+)^2}|$. \bigskip %allows us to find the behaviour of $\PP[|R|>x]$. \bigskip

%$\Psi(t) = A_1t + \sqrt{B + A_2 t^2}$

   The paper is organised as follows: In the second section we will briefly recall basic definitions and properties of subexponential distributions, after that in the third section we will 
   present a~precise 
   statement of the result followed by some remarks and sketch of the proof. Finally, in the last fourth section, we will give the full proof of the results.

\section{Subexponential Distributions}

In this section we will recall well known notions from the theory of heavy-tailed distributions. 
Next we will quote a theorem about tail behaviour of a maxima of perturbed random walk, which will be particularly useful in the proof of the main result. 
Firstly, for a~distribution $F$ on $\RR$ we define tail function $\Fbar$ by the formula $\Fbar(x) = 1-F(x)$ for $x \in \RR$.

\begin{df}\label{DefL}
   A distribution $F$ on $\RR$ is called long-tailed if $\Fbar(x)>0$ for all $x \in \RR$ and for any fixed $y \in \RR$  
   \begin{equation*}\label{PropL}
      \overline{F}(x+y) \sim \overline{F}(x).
   \end{equation*}
   We denote the class of long-tailed distributions by $\Lcal$.
\end{df}

Notice that if $F \in \Lcal$ then the function $x \mapsto \Fbar\left(\log(x)\right)$ is slowly varying as $x \to \infty$. Therefore one can use Potter's Theorem 
(see \cite{bingham1989regular}: Theorem 1.5.6) to obtain the following corollary.

\begin{cor}\label{Potter}
   If $F \in \Lcal$, then for any chosen $\Delta>1$ and $\delta >0$ there exists $X=X(\Delta,\delta)$ such that
   \begin{equation*}
      \frac{\Fbar(x)}{\Fbar(y)} \leq \Delta e^{\delta |x-y|} \qquad \mbox{for } x,y \geq X.
   \end{equation*}
\end{cor}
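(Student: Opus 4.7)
The plan is to reduce the claim to the standard Potter bounds for slowly varying functions via an exponential change of variables. The observation made just before the corollary already tells us that $G(u) := \overline{F}(\log u)$ is slowly varying as $u \to \infty$; I would take a moment to justify this explicitly, using the long-tail property to write
\[
\frac{G(cu)}{G(u)} = \frac{\overline{F}(\log u + \log c)}{\overline{F}(\log u)} \longrightarrow 1
\]
for every fixed $c>0$, since $\log c$ is a fixed real shift and $F \in \mathcal{L}$.

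With slow variation of $G$ in hand, I would invoke Potter's Theorem (Theorem 1.5.6 in \cite{bingham1989regular}) directly: for the prescribed $\Delta>1$ and $\delta>0$ there exists $U_0 = U_0(\Delta,\delta)$ such that
\[
\frac{G(u)}{G(v)} \leq \Delta \, \max\!\left( \frac{u}{v}, \frac{v}{u} \right)^{\delta} \qquad \text{for all } u,v \geq U_0.
\]
The next step is the change of variables $u = e^x$, $v = e^y$. Since $\max(u/v, v/u) = e^{|x-y|}$, the Potter bound translates to
\[
\frac{\overline{F}(x)}{\overline{F}(y)} \leq \Delta \, e^{\delta |x-y|}
\]
valid whenever $e^x, e^y \geq U_0$, i.e.\ whenever $x,y \geq \log U_0$. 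Setting $X := \log U_0$ gives exactly the claimed inequality.

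There is no real obstacle here; the only thing to check carefully is that the quantitative Potter bound in \cite{bingham1989regular} is stated in the form above (multiplicative $\Delta$ and polynomial $\max$ envelope), which it is, and that the exponential substitution preserves the inequality for all $x,y$ above a common threshold (rather than, say, only for $x \geq y$). Since the bound as stated is symmetric in $u$ and $v$, the symmetry in $x$ and $y$ is automatic, so a single threshold $X$ suffices.
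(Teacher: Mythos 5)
Your argument is correct and is precisely the route the paper indicates: note that $u \mapsto \overline{F}(\log u)$ is slowly varying because $F \in \mathcal{L}$, apply Potter's Theorem (Theorem~1.5.6 of \cite{bingham1989regular}) to that slowly varying function, and substitute $u = e^x$, $v = e^y$. The paper leaves these details implicit; you have simply filled them in.
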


It turns out that class $\Lcal$ is too big for our purposes. More precisely, we will need distributions satisfying some convolution properties. Recall that  $F^{*2}$ stands for the~twofold 
convolution of the distribution $F$. 

\begin{df}\label{DefSR}
   A distribution $F$ on $\RR$ is called subexponential if $F \in \mathcal{L}$ and
   \begin{equation*}
      \overline{F^{*2}}(x) \sim 2\overline{F}(x).
   \end{equation*}   
   The class of subexponential distributions will be denoted by $\Scal$.
\end{df}

Note that if $X_1$ and $X_2$ are i.i.d. with distribution $F \in \Scal$, then by the definition above 
\begin{equation*}
   \PP[X_1+X_2>x] \sim 2\PP[X_1>x] \sim \PP[X_1\vee X_2>x].
\end{equation*}
This is a type of phenomena that we want to use in the near future.
We see that $\Scal \subset \Lcal$ and it is a~well known fact that this inclusion is proper. 
For examples of distributions in $\Lcal \setminus \Scal$ see \cite{JAZ:4902368} or \cite{JAZ:4902980}.
The~following proposition is a well known fact which will be useful thought the proofs of the results. We follow the statement presented in~\cite{IntSubExp}. 

\begin{prop}\label{PropFoss}
   Suppose that $F \in  \Scal$ . Let $G_1 , \ldots , G_n$ be distributions such that $\overline{G}_i (x) \sim c_i \overline{F}(x)$  for some constants $c_i \geq 0$, $i = 1, \ldots , n$. 
   Then
   \begin{equation*}
      \overline{G_1 ∗ \ldots ∗ G_n} (x) \sim (c_1 + \ldots + c_n )\overline{F} (x).
   \end{equation*}
   If $c_1 + \ldots + c_n > 0$, then $G_1 ∗\ldots ∗ G_n \in \Scal$.
\end{prop}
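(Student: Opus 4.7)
The plan is induction on $n$, the base case $n=1$ being immediate. For the inductive step it suffices to establish the $n=2$ case, for then setting $\tilde G := G_1 \ast \dots \ast G_{n-1}$ one has $\overline{\tilde G}(x) \sim (c_1 + \dots + c_{n-1})\Fbar(x)$ and (when this sum is positive) $\tilde G \in \Scal$ by the inductive hypothesis, and the $n=2$ case applied to $(\tilde G, G_n)$ closes the induction. The degenerate situation $c_1 + \dots + c_n = 0$ only requires the tail bound and is handled by the same argument with a trivial simplification.

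For the $n=2$ case with $c_1 + c_2 > 0$, I would first record that every $G_i$ with $c_i > 0$ lies in $\Lcal$: from $\overline{G_i}(x)/\Fbar(x) \to c_i$ and $F \in \Lcal$ the ratio $\overline{G_i}(x+y)/\overline{G_i}(x)$ tends to $1$. Taking $X_1 \sim G_1$, $X_2 \sim G_2$ independent and choosing an auxiliary function $h(x) \to \infty$ with $h(x)=o(x)$, I would decompose
\begin{align*}
   \PP[X_1+X_2>x] &= \PP[X_1+X_2>x,\, X_2 \leq h(x)] + \PP[X_1+X_2>x,\, X_1 \leq h(x)] \\
   &\quad + \PP[X_1>h(x),\, X_2>h(x),\, X_1+X_2>x],
\end{align*}
these events being pairwise disjoint once $x > 2h(x)$. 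Corollary~\ref{Potter} yields $\overline{G_1}(x-y) \sim \overline{G_1}(x)$ uniformly for $|y| \leq h(x)$, so the first summand equals $\sim c_1 \Fbar(x)$; symmetrically the second equals $\sim c_2 \Fbar(x)$.

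The step I expect to absorb most of the effort is showing the ``both-large'' term is $o(\Fbar(x))$. My plan is to exploit $F \in \Scal$ through the equivalent formulation
\begin{equation*}
   \int_{h(x)}^{x-h(x)} \Fbar(x-y)\, F(\ud y) = o(\Fbar(x)),
\end{equation*}
which is a direct rephrasing of $\overline{F^{\ast 2}}(x) \sim 2\Fbar(x)$. Writing the ``both-large'' probability as the integral $\int_{h(x)}^\infty \overline{G_1}(\max\{h(x), x-y\}) \, G_2(\ud y)$ and using the bounds $\overline{G_i}(x) = O(\Fbar(x))$ together with an integration-by-parts style comparison, one dominates it by a constant multiple of the above critical integral plus the boundary term $\Fbar(h(x))\Fbar(x-h(x))$, both of which are $o(\Fbar(x))$. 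Finally, to conclude $G_1 \ast G_2 \in \Scal$, I would verify $G_1 \ast G_2 \in \Lcal$ by the same ratio argument applied to $\overline{G_1 \ast G_2}(x) \sim (c_1+c_2)\Fbar(x)$, and then apply the tail asymptotic just established to $(G_1 \ast G_2) \ast (G_1 \ast G_2)$, giving $\overline{(G_1 \ast G_2)^{\ast 2}}(x) \sim 2\overline{G_1 \ast G_2}(x)$. The delicate point throughout is transferring the subexponential estimate from $F$ to the $G_i$, which are not themselves a priori known to be subexponential; this is where Potter-type uniform estimates do the heavy lifting.
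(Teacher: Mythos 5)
The paper does not prove this proposition; it is quoted verbatim as Proposition 2 with a citation to Foss--Korshunov--Zachary \cite{IntSubExp}, so there is no in-paper argument for your proposal to be compared against.

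On its own merits your plan is essentially a correct reconstruction of the standard argument: the induction reduction to $n=2$, the three-piece decomposition with a slowly growing cutoff $h$, and the identification of the ``both-large'' term as the thing controlled by $F\in\Scal$ are all right, and the bootstrapping step for subexponentiality (running the two-fold tail estimate with both factors equal to $G_1\ast\dots\ast G_n$) is a clean way to close it. One tool is misattributed, though. Corollary~\ref{Potter} gives only the one-sided bound $\overline{G_1}(x-y)/\overline{G_1}(x)\le \Delta e^{\delta|y|}$, which blows up as $y$ ranges over $[-h(x),h(x)]$; it cannot yield $\overline{G_1}(x-y)\sim\overline{G_1}(x)$ uniformly for $|y|\le h(x)$. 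What you actually need here is the standard insensitivity lemma for long-tailed laws: for $G\in\Lcal$ one can choose $h(x)\to\infty$, $h(x)=o(x)$, so that $\sup_{|y|\le h(x)}|\overline{G}(x-y)/\overline{G}(x)-1|\to 0$, and the same choice of $h$ should then be fed into the ``both-large'' estimate. Finally, the domination of
\begin{equation*}
\int_{h(x)}^{x-h(x)}\overline{G_1}(x-y)\,G_2(\ud y)
\end{equation*}
by $C\int_{h(x)}^{x-h(x)}\Fbar(x-y)\,F(\ud y)+C\,\Fbar(h(x))\Fbar(x-h(x))$ is correct but deserves to be spelled out: after replacing $\overline{G_1}$ by $C\Fbar$, condition on the $F$-variable rather than on $G_2$ (a Fubini swap, which is the ``integration by parts'' you gesture at), then replace the resulting $\overline{G_2}$ by $C\Fbar$; the boundary term $\Fbar(h(x))\Fbar(x-h(x))=o(\Fbar(x))$ follows from $F\in\Scal$ via a two-way counting of $\PP[X_1>h(x),\,X_1+X_2>x]$ for $X_1,X_2$ i.i.d.\ $\sim F$. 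You should also note that for indices with $c_i=0$ the factor $G_i$ need not be long-tailed, so those ``one-small'' contributions must be handled by the crude monotonicity bound $\overline{G_i}(x-y)\le\overline{G_i}(x-h(x))=o(\Fbar(x-h(x)))$ together with $\Fbar(x-h(x))\sim\Fbar(x)$, rather than by the $\Lcal$-ratio argument.
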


The following theorem by Palmowski and Zwart~\cite{palmowski2007tail} is crucial for our future purposes. The result itself deals with i.i.d. sequence  
 $\{(X_n, Y_n)\}_{n \geq 0}$, where $X_n$ 
are i.i.d. increments of negatively driven stochastic process and $Y_n$ being maxima of this process taken at some renewal epochs. Nevertheless the result 
and the proof presented in~\cite{palmowski2007tail} remains valid for arbitrary i.i.d. sequence $\{(X_n, Y_n) \}_{n\geq 0}$.

\begin{thm} \label{ZPBZ}
   Let $\{ (X_n,Y_n) \}_{n \geq 0}$ be a sequence of i.i.d. two-dimensional random vectors such that $\EE[X_1] < 0$ and $\EE[X_1\vee Y_1] < \infty$.
   Assume that distribution on $\RR_+$ given by the tail function
   \begin{equation*}
      x \mapsto {1\wedge \int^{\infty}_x{\PP[X_1\vee Y_1> y]\: \ud y}}
   \end{equation*}
   is subexponential. Then
   \begin{equation*}
      \PP\left[ \sup_{n \geq 0}\left\{ Y_{n+1} + \sum_{j=1}^n{X_j}\right\} >x   \right] \sim -\frac{1}{\EE[X_1]} \int^{\infty}_x{\PP[X_1\vee Y_1> y]\: \ud y}.
   \end{equation*}
\end{thm}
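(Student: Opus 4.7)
Here is a plan for a proof of Theorem~\ref{ZPBZ}. Write $S_n = \sum_{j=1}^n X_j$, $W_j = X_j \vee Y_j$, and $M = \sup_{n \geq 0}\{Y_{n+1} + S_n\}$. The first step is to check that $M<\infty$ a.s.: since $\EE[X_1]<0$ and $\EE[W_1]<\infty$, a Borel--Cantelli argument on the events $\{Y_{n+1}+S_n>0\}$ gives the required finiteness. Next, by conditioning on $(X_1,Y_1)$ and using the i.i.d.\ structure, one obtains the distributional identity
$$
   M \stackrel{d}{=} \max\{Y_1,\ X_1+M'\},
$$
with $M'$ independent of $(X_1,Y_1)$ and distributed as $M$; this exhibits $M$ as the stationary solution of a max-type recursion. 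The subexponentiality of the integrated tail of $W_1$ is the precise hypothesis that makes the ``single big event'' heuristic rigorous at both the upper and lower ends.

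For the \textbf{lower bound}, fix $\epsilon\in(0,1)$ and set $c=|\EE[X_1]|$. For each $n$, the event $\{Y_{n+1}+S_n>x\}$ is most likely to occur when $W_{n+1}$ is atypically large---of order at least $x+nc$---while $S_n$ stays near $-nc$. A strong-law argument reduces the probability of $n$ being a maximising index to $(1+o(1))\PP[W_1>x+nc]$, and these events can be made essentially disjoint by taking the smallest such $n$. Summing and invoking the Riemann-sum identity
$$
   \sum_{n\geq 0}\PP[W_1 > x+nc]\sim c^{-1}\int_x^\infty\PP[W_1>y]\,\ud y,
$$
valid because the integrated tail is long-tailed (so Corollary~\ref{Potter} applies), yields the stated lower bound.

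For the \textbf{upper bound}, start from $\PP[M>x]\leq\sum_{n\geq 0}\PP[Y_{n+1}+S_n>x]$ and, for each $n$, decompose the event according to which of the i.i.d.\ pairs $(X_j,Y_j)_{j\leq n+1}$ contributes the largest $W_j$. Proposition~\ref{PropFoss} applied to the convolutions arising in this decomposition shows that the dominant contribution comes from exactly one atypically large $W_j$, while the complementary ``no big jump'' event is controlled by large-deviation estimates for the negatively drifting walk $S_n$ and is negligible relative to the subexponential integrated tail. Summing these contributions reproduces the same integral expression, with the matching constant $c^{-1}=-\EE[X_1]^{-1}$.

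The main obstacle is the interaction between the perturbation $Y_{n+1}$ and the partial sum $S_n$, which must be handled uniformly in $n$: roughly $x/c$ terms are being summed, each of order $\int_{x+nc}^\infty\PP[W_1>y]\,\ud y$, so uniformity of the error in the ``single big event'' decomposition is essential. Subexponentiality of the integrated tail, via Proposition~\ref{PropFoss} and Potter's bounds in Corollary~\ref{Potter}, is precisely what delivers this uniformity and lets one collapse the many-index sum to the single integral on the right-hand side.
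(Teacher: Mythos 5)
Theorem~\ref{ZPBZ} is not proved in this paper: it is quoted from Palmowski and Zwart~\cite{palmowski2007tail}, with the remark that their regenerative-process argument carries over verbatim to an arbitrary i.i.d.\ sequence $\{(X_n,Y_n)\}_{n\geq0}$. So there is no in-house proof to compare against. On its own merits, your single-big-jump framework -- disjointify over the first index at which $W_j = X_j\vee Y_j$ is atypically large, control the walk by the strong law, and collapse the resulting sum into the Riemann approximation $\sum_{n\geq0}\PP[W_1>x+nc]\sim c^{-1}\int_x^{\infty}\PP[W_1>y]\,\ud y$ -- is the standard and correct strategy for such Veraverbeke-type results, and the combination of Corollary~\ref{Potter} and Proposition~\ref{PropFoss} is exactly what makes the many-$n$ sum uniform, as you note.

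There is, however, a genuine gap in your lower bound as stated. You write that the event $\{Y_{n+1}+S_n>x\}$ is most likely to occur when $W_{n+1}=X_{n+1}\vee Y_{n+1}$ is atypically large, but $X_{n+1}$ does not appear in $Y_{n+1}+S_n$ at all, since $S_n=\sum_{j\leq n}X_j$. When the large part of $W_{n+1}$ is $X_{n+1}$ rather than $Y_{n+1}$, it only boosts the supremum through the later indices $m\geq n+1$, and then one must additionally ensure that the perturbation $Y_{m+1}$ at one of those later indices is not so negative as to cancel the gain. This asymmetry between the $X$-part and $Y$-part of $W$ is precisely what makes the lower bound delicate, and it is visible in the paper's own lower-bound argument for the perpetuity (Step~3 of the proof of Theorem~\ref{MainThm}), where the sets $G_n$ impose both that $\underline B_{n+1}$ is not too negative and that $R^*_{n+2}>\delta$ to handle exactly this possibility. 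To repair your sketch you should split the big-jump event into the two sub-cases $Y_{n+1}$ large versus $X_{n+1}$ large, and in the second supply a lower-tail control on the next perturbation before summing.
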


The $\RR_+$ in the theorem above and for the rest of the paper stands for $[0,+\infty)$. For conditions on $F$ guaranteeing subexponentiality of distribution given by the tail function 
$x~\mapsto~{1 \wedge \int^{\infty}_x{\Fbar(y)\: \ud y}}$ see~\cite{Kluppelberg1988}.

\section{Main Result}
   In this section we will give a precise statement of the main result of the paper followed by some remarks and idea behind the proof. 

\subsection{Statement}
   Recall that we consider a Markov chain $\{R_n\}_{n\geq 0}$ given by \eqref{SDS}, where for each $n \in \NN$ the function $\Psi_n \colon \RR\to\RR$ satisfies 
   \begin{equation}\label{ConditionsOptimal}
      A_nt+B_n-D_n \leq \Psi_n(t) \leq A_nt^+ +B^+_n+D_n \qquad \mbox{for } t \in \RR
   \end{equation}
   and some random variables $A_n$, $B_n$ and $D_n$. We are assuming that $\{ (\Psi_n,A_n,B_n,D_n) \}_{n \geq 0}$ are i.i.d., where $\Psi_n$ are Lipschitz functions 
   with
   \begin{equation}\label{DefLip}
      {\rm Lip}(\Psi_n) = \sup_{t_1\neq t_2} \left|\frac{\Psi_n(t_1) - \Psi_n(t_2)}{t_1-t_2} \right|. 
   \end{equation}
   Put $(\Psi,A,B,D)=(\Psi_0,A_0,B_0,D_0)$.
   From now our standing assumptions will be
   \begin{equation}\label{Standing} 
         A,D \geq 0 \mbox{ a.s.}, \qquad \EE[\log(A)] > -\infty, \qquad \EE[\log({\rm Lip}(\Psi))] < 0, \qquad  \EE[\log^+|B\pm D|] < \infty.
   \end{equation}
   Recall that $\log^+(x) = \log(x \vee 1)$. 
   Note that \eqref{ConditionsOptimal} implies  
   \begin{equation*}
      A_n \leq {\rm Lip}(\Psi_n)
   \end{equation*}
   and hence $\EE[\log(A)] <0$.
   For infinite time horizon, that is the case of the stationary distribution, we will also need to assume
   \begin{equation}\label{MomentsNew} 
         \EE[\log^+(A\vee B)^{1+\gamma}]< \infty \quad \mbox{for some }\gamma >0.
   \end{equation}
   In order to ensure that the stationary distribution has right-unbounded support we will need to assume the following tail behaviour 
   \begin{equation}\label{Tail}
      \PP[A\vee (B\pm D) > x] \sim \PP[A \vee B >x], \qquad \PP[A > x, \: B-D \leq -x] = o(\PP[A\vee B>x]).
   \end{equation}
   Define a probability distribution $F_I$ on $\RR_+$ via its tail function $\Fbar_I$ which is given by 
   \begin{equation}\label{ITF}
      \Fbar_I(x) = 1 \wedge \int_x^{\infty}{\PP[\log(A \vee B) >y]\: \ud y}.
   \end{equation}
   \begin{thm} \label{MainThm} 
      Assume that conditions \eqref{ConditionsOptimal},\eqref{Standing}, \eqref{MomentsNew} and \eqref{Tail} are satisfied and that $F_I$ defined by \eqref{ITF} is subexponential. 
      Then the Markov chain $\{R_n\}_{n\geq 0}$ given by \eqref{SDS} converges in distribution to a~unique stationary distribution which is a unique solution of \eqref{FixedPoint}. Furthermore
      \begin{equation}\label{WeakR}
      -\frac{\PP[R> 0]}{\EE[\log(A)]}\leq\liminf_{x\to\infty}\frac{\PP[R>x]}{\Fbar_I(\log(x))}\leq\limsup_{x \to\infty}{\frac{\PP[R>x]}{\Fbar_I(\log(x))}} \leq -\frac{1}{\EE[\log(A)]}.
      \end{equation}
      In particular, if $B-D > 0$ a.s., then 
      \begin{equation}\label{Bgeq0}
         \PP[R>x] \sim -\frac{1}{\EE[\log(A)]}\int_{\log(x)}^{\infty}{\PP[\log(A\vee B) > y] \: \ud y }. 
      \end{equation}
      Moreover, if
      \begin{itemize}
         \item $\PP[A>x] = o (\PP[B>x])$ then
               \begin{equation*}
                  \PP[R> x] \sim -\frac{1}{\EE[\log(A)]} \int_{\log(x)}^{\infty}{\PP[\log^+(B) > y] \: \ud y }, 
               \end{equation*}
         \item $\PP[B>x] = o (\PP[A>x])$ then
               \begin{equation*}
                  \PP[R> x] \sim -\frac{\PP[R > 0]}{\EE[\log(A)]} \int_{\log(x)}^{\infty}{\PP[\log(A) > y] \: \ud y }.
               \end{equation*}
      \end{itemize}
   \end{thm}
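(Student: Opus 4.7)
The plan is to sandwich $R$ between two random series of perpetuity type and apply Theorem \ref{ZPBZ} to each envelope. Existence and uniqueness first follow from the classical Letac argument: under \eqref{Standing}, the backward iterations $\Psi_1\circ\cdots\circ\Psi_n(r_0)$ converge a.s.\ to a random variable whose law is the unique solution of \eqref{FixedPoint}. Iterating the upper envelope $\Psi_j(t)\leq A_j t^++B_j^++D_j$ along the backward orbit (using that $A_j\geq 0$ makes both envelopes monotone) gives the a.s.\ bound
\begin{equation*}
R\;\leq\;\sum_{k\geq 1}(B_k^++D_k)\prod_{j<k}A_j,
\end{equation*}
and symmetrically $R\geq\sum_{k\geq 1}(B_k-D_k)\prod_{j<k}A_j$ a.s. Both series converge because $\log\prod_{j<k}A_j$ is a random walk with negative drift and $\EE[\log^+|B\pm D|]<\infty$.

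Taking logarithms, each envelope is of the form $\sum_k\exp(S_{k-1}+Y_k)$ with $S_n=\sum_{j\leq n}\log A_j$ and $Y_k$ a subexponential perturbation. The joint tail hypothesis \eqref{Tail} and Proposition \ref{PropFoss} ensure $\PP[\log A_1\vee Y_1>y]\sim\PP[\log(A\vee B)>y]$ in either case, so Theorem \ref{ZPBZ} delivers
\begin{equation*}
\PP\!\left[\sup_{k\geq 1}(S_{k-1}+Y_k)>\log x\right]\;\sim\;-\frac{\Fbar_I(\log x)}{\EE[\log A]}.
\end{equation*}
It then remains to transfer from supremum to series, which is the \emph{single big jump} principle: since $F_I\in\Scal$, a large value of $\sum_k e^{S_{k-1}+Y_k}$ is produced by one dominant summand. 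A standard splitting $\PP[\sum_k e^{S_{k-1}+Y_k}>x]\leq\PP[\sup_k(S_{k-1}+Y_k)>\log x-\varepsilon]+\mathrm{rem}$, with the remainder controlled by Corollary \ref{Potter}, closes the argument in the supremum direction; the reverse is trivial. This yields the upper bound in \eqref{WeakR}.

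The lower bound is more delicate because the lower envelope is a \emph{signed} series: when $\prod_{j<K}A_j$ is exceptionally large, the sign of the dominant term $(B_K-D_K)\prod_{j<K}A_j$ is the sign of $B_K-D_K$, which may be negative. The hypothesis $\PP[A>x,B-D\leq-x]=o(\PP[A\vee B>x])$ from \eqref{Tail} rules out the worst joint behaviour, but the general lower bound still needs to account for times when the chain is negative. A direct route is through the forward iteration: since $R_n\stackrel{d}{=}R$ with $R_0$ independent of $\{(A_j,B_j,D_j)\}_{j\leq n}$, one has $R_n\geq R_0\prod_{j\leq n}A_j+\sum_{k\leq n}(B_k-D_k)\prod_{k<j\leq n}A_j$, and conditioning on $\{R_0>0\}$ discards the $R_0$-term and yields $\PP[R>x]\geq\PP[R>0]\cdot\PP[\mathrm{sum}>x]$, from which the Theorem \ref{ZPBZ} estimate produces the lower bound of \eqref{WeakR}.

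The refined assertions follow by inspection. If $B-D>0$ a.s., the lower envelope is a sum of non-negative terms, so $R>0$ a.s.\ and the two constants in \eqref{WeakR} coincide, giving \eqref{Bgeq0}. If $\PP[A>x]=o(\PP[B>x])$, Proposition \ref{PropFoss} replaces $\log(A\vee B)$ by $\log^+B$; the dominant contribution then comes from a single exceptionally large $B_k$, which forces the sum to be positive and absorbs the $\PP[R>0]$ factor. In the opposite regime $\PP[B>x]=o(\PP[A>x])$, the dominant contribution is a single large $A_k$ that enlarges $R$ only when the chain is positive just before step $k$; a Breiman-type argument combining Corollary \ref{Potter} with the asymptotic independence of $\{A_k\text{ large}\}$ and $\{R_{k-1}>0\}$ produces the genuine surviving factor $\PP[R>0]$. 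The main technical obstacle throughout is the single big jump transfer from supremum to series, together with the careful handling of signs in the signed lower envelope.
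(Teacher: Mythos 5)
Your high-level plan matches the paper's: sandwich $R$ between the monotone envelopes $\underline{R}$ and $\overline{R}$, feed the associated perturbed random walks into Theorem~\ref{ZPBZ}, and then transfer the supremum asymptotic to the series via a single-big-jump argument. The lower bound via the backward representation $R=\Psi_{1:n+1}(R^*_{n+2})$ and conditioning on $\{R>0\}$ is also in the same spirit as the paper's Step~3 with its sets $G_n$, though you would still need the cone estimate (Lemma~\ref{ConeLemma}) to control the negative contributions of the pre-peak terms, which you do not spell out.

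There are two genuine gaps. The first and more serious one is in the supremum-to-series transfer, which you correctly identify as the crux but then dismiss as ``a standard splitting \dots with the remainder controlled by Corollary~\ref{Potter}.'' The difficult event is $\{\overline{R}>e^x,\,M\le \log\varepsilon +x\}$, and Potter bounds alone cannot bound its probability by $\varepsilon^{\gamma/4}O(\PP[M>x])$: on that event every individual term $\overline{B}_{k+1}\prod_{j\le k}A_j$ is small, so a large sum requires \emph{many} terms of comparable size. The paper's Step~2 devotes its entire length to this. It introduces the level sets $\Qcal(k)$, shows by a pigeonhole argument that $\#\Qcal(k)>e^k/(5k^2)$ for some $k\ge -\log\varepsilon$, and then deduces (dividing the first and $p$-th terms of $\Qcal(k)$ and using $A_{\tau(k)+1}/\overline{B}_{\tau(k)+1}\le e^k$) that a secondary supremum $M_k^*$ started from $\tau(k)+2$ must exceed $e^{k/2}$. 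Only then do a strong-Markov conditioning on $\tau(k)$ and a union over $k$, together with the Potter bound on $\PP[M>x-k]/\PP[M>x]$, close the estimate. None of this is present in your sketch, and the ``standard splitting'' you invoke does not produce a bound of the right order without it.

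The second gap is in the refined case $\PP[B>x]=o(\PP[A>x])$. The general lower bound in \eqref{WeakR} already gives $\liminf \ge -\PP[R>0]/\EE[\log A]$; what is missing is the matching \emph{upper} bound $\limsup\le -\PP[R>0]/\EE[\log A]$. Your ``Breiman-type'' remark about a large $A_k$ enlarging $R$ only when the chain is positive reads as a lower-bound heuristic, not an upper bound. The paper proves the upper bound by estimating $\PP[R\le e^x,\,M>x]$ from below (via the disjoint sets $H_n$ on which $\overline{B}_{n+1}$ is small, $A_{n+1}$ is huge, and $R^*_{n+2}\le 0$) and subtracting this from $\PP[M>x]$; this yields the subtraction of the factor $\PP[R\le 0]$ from the crude $1/\mu$ upper bound. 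You would need to supply that construction, or an equivalent, to finish the argument.
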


   Since in last two cases of the above theorem we obtain $\PP[R>x] \sim c\Fbar_I(\log(x))$ with $F_I \in \Scal\subseteq \Lcal$ and some constant $c$ we see that in each case 
   the distribution of $R$ exhibits slowly varying tail. 

   \begin{rem}\label{RemarkSlow}
      From the proof of the Theorem \ref{MainThm} one can see that in order to establish the lower bound 
      in \eqref{WeakR} one only uses the fact that the distribution of the random variable $A\vee B$ has a~slowly varying tail. 
      Precisely, assume \eqref{ConditionsOptimal}, \eqref{Standing}, \eqref{Tail} and that the function $x \mapsto \PP[A \vee B >x]$ is slowly varying. Then
      \begin{equation*}
         -\frac{\PP[R> 0]}{\EE[\log(A)]}\leq\liminf_{x\to\infty}\frac{\PP[R>x]}{\Fbar_I(\log(x))}
      \end{equation*}
      where the function $\Fbar_I$ is given by \eqref{ITF}. Since when $F_I \in \Lcal$ it is true that 
      $\PP[A \vee B >x] = \PP[\log(A \vee B) > \log(x)] = o(\Fbar_I(\log(x)))$ and we can also conclude that $\PP[A\vee B >x] = o(\PP[R>x])$.
   \end{rem}

   In order to obtain an extensive description of the asymptotic properties of the  Markov chain $\{R_n\}_{n\geq 0}$ given by \eqref{SDS} we will also investigate 
   the tail behaviour of random variables $R_n$ for finite~$n$.
   Put
   \begin{equation}\label{FDef}
      \Fbar(x)=\PP\left[\log(A\vee B) >x\right].
   \end{equation}
   It turns out that in case of finite time horizon one can obtain result analogous to Theorem~\ref{MainThm}. 

   \begin{thm}\label{PropFin}
      Assume \eqref{ConditionsOptimal}, \eqref{Standing}, \eqref{Tail}, $0 \leq n<\infty$ and that $F$ defined by \eqref{FDef} is subexponential. Assume additionally that
      \begin{equation}\label{Wcondition}
         \PP[R_0>x] \sim w \PP[A \vee B > x]
      \end{equation}
      for some constant $w \geq 0$. Then
      \begin{equation}\label{WeakFinU}
         w+\sum_{k=0}^{n-1}{\PP[R_k>0]} \leq\liminf_{x\to\infty}\frac{\PP[R_{n}>x]}{\PP[A \vee B > x]}\leq
             \limsup_{x \to\infty}{\frac{\PP[R_{n}>x]}{\PP[A \vee B >x ]}} \leq w+n.
      \end{equation}
      In particular if $R_0>0$ a.s. and $B-D>0$ a.s. then 
      \begin{equation*}
         \PP[R_n >x] \sim (w+n) \PP[A\vee B>x].
      \end{equation*}
      Furthermore if
      \begin{itemize}
         \item $\PP[A>x] = o (\PP[B>x])$ then
               \begin{equation}\label{FinBUW}
                        \PP[R_n>x]\sim  (w+n)\PP[B > x], 
               \end{equation}
         \item $\PP[B>x] = o (\PP[A>x])$ then
               \begin{equation}\label{FinAU}
                  \PP[R_n>x] \sim  \left( w+\sum_{k=0}^{n-1}{\PP[R_k>0]}\right)\PP[A>x].
               \end{equation}
      \end{itemize}       
   \end{thm}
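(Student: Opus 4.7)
The plan is to proceed by induction on $n$. The base case $n=0$ is immediate from~\eqref{Wcondition}: the sum $\sum_{k=0}^{n-1}\PP[R_k>0]$ is empty and both bounds in~\eqref{WeakFinU} collapse to $w$. For the inductive step I would exploit the two-sided sandwich
\begin{equation*}
   A_{n+1} R_n + B_{n+1} - D_{n+1}\;\leq\; R_{n+1}\;\leq\; A_{n+1} R_n^+ + B_{n+1}^+ + D_{n+1}
\end{equation*}
from~\eqref{ConditionsOptimal}, together with the fact that the triple $(A_{n+1},B_{n+1},D_{n+1})$ is independent of $R_n$.

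The central technical device is a log-transformation. Writing $\Fbar$ as in~\eqref{FDef}, one has $\PP[A\vee B>x]=\Fbar(\log x)$, and since $F\in\Lcal$ this tail function is slowly varying in $x$. This gives $\PP[X+Y>x]\sim (c_X+c_Y)\PP[A\vee B>x]$ for independent nonnegative $X,Y$ with tails $c_X\PP[A\vee B>\cdot]$ and $c_Y\PP[A\vee B>\cdot]$, while Proposition~\ref{PropFoss} applied to $\log X+\log Y$ yields the analogous $\PP[XY>x]\sim (c_X+c_Y)\PP[A\vee B>x]$ (and remains valid when one of the logarithms is defective, as for $\log R_n^+$ on $\{R_n\leq 0\}$). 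For the upper bound I would condition on $(A,B,D)$ and integrate the inductive hypothesis $\PP[R_n>t]\leq (w+n+\varepsilon)\PP[A\vee B>t]$ against its law, using slow variation to handle $\PP[A\vee B>(x-B^+-D)/A]/\PP[A\vee B>x]\to 1$. Condition~\eqref{Tail} then ensures that the $D$-perturbation does not inflate the tail beyond $\PP[A\vee B>\cdot]$, producing the sharp increment $+1$ and giving $\limsup\leq w+n+1$.

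For the lower bound I would use $R_{n+1}\geq AR_n+B-D$ and decompose into two contributing events: $\{AR_n>x\}\cap\{B-D\geq 0\}$ and $\{B-D>x\}\cap\{AR_n\geq 0\}$. The first, via the product log-convolution restricted to $\{R_n>0\}$, contributes $\sim\PP[R_n>0]\PP[A\vee B>x]$ and accounts for the new summand $\PP[R_n>0]$ in the lim inf; the second clause of~\eqref{Tail} is what rules out the diagonal event $\{A>x,\,B-D\leq -x\}$ which would otherwise cancel this contribution. The three special cases are refinements of the same argument. If $R_0>0$ and $B-D>0$ a.s., then the recursive lower bound propagates $R_k>0$ for every $k$, so $\PP[R_k>0]=1$ and the two bounds coalesce at $w+n$. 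If $\PP[A>x]=o(\PP[B>x])$ the $A$-contribution in the log-convolution is suppressed and the upper bound becomes sharp, giving $(w+n)\PP[B>x]$. If $\PP[B>x]=o(\PP[A>x])$ the $B$-contribution vanishes and only the multiplicative $A$-term survives, yielding $\bigl(w+\sum_{k=0}^{n-1}\PP[R_k>0]\bigr)\PP[A>x]$.

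The main obstacle I anticipate is maintaining sharp constants through the inductive step. Since $(A,B,D)$ are jointly dependent, Proposition~\ref{PropFoss} cannot be applied directly to the triple $(AR_n^+,B^+,D)$; the structural content of~\eqref{Tail} will be essential for treating $A$ together with $B+D$ as the effective jump pair. Moreover the absence of polynomial moments on $A$ (the standing assumption~\eqref{Standing} provides only a log-moment) rules out a naive Breiman-type dominated-convergence argument for $\PP[AR_n^+>x]$, forcing the entire analysis into the logarithmic picture where subexponentiality of $F$ does the heavy lifting.
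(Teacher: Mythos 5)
Your high-level plan matches the paper's: argue by induction on $n$, use the sandwich from \eqref{ConditionsOptimal}, prove a one-step lemma, and split the lower bound into the two sources of largeness. However, the details diverge in places where the paper's particular choices matter, and your proposal has gaps in exactly those places.

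For the upper bound, you propose to condition on $(A,B,D)$ and assert $\PP[A\vee B>(x-B^+-D)/A]/\PP[A\vee B>x]\to 1$. That limit is not uniform in $A$: as $A\to 0$ the argument $(x-B^+-D)/A\to\infty$, and slow variation gives no control. You flag at the end that a Breiman-type dominated-convergence argument is unavailable, but the conditioning step you describe is precisely such an argument and you do not explain how to rescue it. The paper avoids this entirely with the one-line bound $R_1\leq\overline{B}_1+R_0^+A_1\leq(1+R_0^+)(A_1\vee\overline{B}_1)$, which turns the problem into the tail of a \emph{product of two independent factors} $(1+R_0^+)$ and $(A_1\vee\overline{B}_1)$, each with tail $\asymp\PP[A\vee B>\cdot]$, after which Proposition~\ref{PropFoss} on the logarithms gives $\limsup\leq w_2+1$ with no integration against the law of $(A,B,D)$. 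This step is a genuine ingredient you are missing; condition \eqref{Tail} is what makes the tail of $A_1\vee\overline{B}_1$ asymptotically equal to $\PP[A\vee B>\cdot]$, which is the only place \eqref{Tail} enters the upper bound.

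For the lower bound, the decomposition into $\{AR_n>x\}\cap\{B-D\geq 0\}$ and $\{B-D>x\}\cap\{AR_n\geq 0\}$ does not give the claimed constants. You assert that the first event contributes $\sim\PP[R_n>0]\,\PP[A\vee B>x]$ ``via the product log-convolution restricted to $\{R_n>0\}$,'' but a Foss-type convolution of $\log A$ and $\log R_n^+$ produces a tail $\sim(c_A+c_n)\Fbar(\log x)$ where $c_A$ is a tail ratio of $A$ to $A\vee B$ (which need not exist, and in any case is unrelated to $\PP[R_n>0]$) and $c_n$ is your inductive constant; it does not produce $\PP[R_n>0]$. The two events also conflate the ``new input is large'' and ``$R_n$ is large'' scenarios inside the single set $\{AR_n>x\}$, so their contributions do not add cleanly. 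The paper separates along the correct axis: one event ($G_1$) where the new input $(A_1,\underline{B}_1)$ is in a fixed box and $R_0>e^{x+K+L}$ is large, contributing $\sim w_1\PP[A\vee B>e^x]$; and another ($G_0$) where $A_1\vee\underline{B}_1>e^{x+K+L}$, $\underline{B}_1$ is not too negative, and $R_0>\delta$, contributing $\sim\PP[R_0>\delta]\,\PP[A\vee B>e^x]$ after invoking both clauses of \eqref{Tail}. Keeping $A\vee B$ together as the ``effective jump'' (rather than splitting $A$ from $B$) is essential, and the $\PP[R_k>0]$ factors arise because the large-input event only helps when $R_0>\delta$, not from any convolution identity. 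You gesture at the right ideas, but as written the constants are not obtained.

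The three corollary cases are then routine once the one-step lemma is in place, and your remarks there are consistent with the paper (in the case $B-D>0$ and $R_0>0$ a.s.\ positivity propagates; in the two dominance cases one of the events degenerates). But the proposal as it stands has a genuine gap: the lower-bound decomposition does not establish the claimed $\liminf$, and the upper-bound argument does not survive the nonuniformity in $A$ without the multiplicative bound the paper uses.
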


   \begin{rem}
      Assume \eqref{Wcondition}, \eqref{ConditionsOptimal}, \eqref{Tail}, \eqref{Standing}, $0 \leq n<\infty$, and that the function 
      $ x \mapsto \PP[A\vee B >x]$ is slowly varying. Then
      \begin{equation*}
         w+\sum_{k=0}^{n-1}{\PP[R_k>0]} \leq \liminf_{x \to \infty }{ \frac{ \PP[R_n > x]  }{\PP[A \vee B >x]}  }. 
      \end{equation*}
   \end{rem}

%   The approach presented in the proofs of the both theorems is quite universal and can be easily extended for maps satisfying bounds different from \ref{ConditionsOptimal}. 
%   We give the following proposition as an example.

%   \begin{prop}\label{PropMax}
%      The statement of Theorem~\ref{MainThm} and Theorem~\ref{PropFin} will remain the same if we replace \eqref{ConditionsOptimal} by
%      \begin{equation*}
%         (A_nt)\vee(B_n-D_n) \leq \Psi_n(t) \leq A_nt^+ +B^+_n+D_n \qquad \mbox{for } t \in \RR.
%      \end{equation*}
%      Additionally all the remarks are also valid.
%   \end{prop}

\subsection{Random Difference Equation}\todo{Dopisac RDE}
   Suppose, for the rest of this section, that $\Psi(t)=At+B$ and $D=0$.
   In the case when $B > 0$ a.s., Theorem~\ref{MainThm} gives a~description of the tail of $R$ in terms of the distribution of $A \vee B$,
   which allows us to present an~example showing that in the case when $\PP[A>x]\sim \PP[B>x]$, the information about marginal distributions of $A$ and $B$ is not enough to determine the 
   tail asymptotic of $R$.
   \begin{ex}\label{OnlyEx}
      Fix a distribution $F$ on $\RR_+$ and consider two types of input: First one $\left(A^{(1)},B^{(1)}\right)$: with $A^{(1)} = B^{(1)}$ with distribution $F$. Then, assuming that
      the assumptions are satisfied, Theorem~\ref{MainThm} states in \eqref{Bgeq0}
      that the corresponding perpetuity $R^{(1)}$ satisfies 
      \begin{equation*}
         \PP\left[R^{(1)}>x\right] \sim -\frac{1}{\EE\left[\log\left(A^{(1)}\right)\right]} \int_{\log(x)}^{\infty}{\PP\left[\log\left(A^{(1)}\right) > y\right] \: \ud y }.
      \end{equation*}
      If now we consider the second type of input, namely $\left(A^{(2)}, B^{(2)}\right)$ where $A^{(2)}$, $B^{(2)}$ are independent with the same distribution $F$, 
      Theorem~\ref{MainThm} states that the corresponding
      perpetuity $R^{(2)}$ satisfies 
      \begin{equation*}
         \PP\left[R^{(2)}>x\right] \sim -\frac{1}{\EE\left[\log\left(A^{(2)}\right)\right]} \int_{\log(x)}^{\infty}{\PP\left[\log\left(A^{(2)}\vee B^{(2)}\right) > y\right] \: \ud y }
      \end{equation*}
      and since $A^{(1)} \stackrel{d}{=} A^{(2)}$ we can write
      \begin{equation*}
         \PP\left[A^{(2)}\vee B^{(2)} >x  \right] \sim 2 \PP\left[A^{(2)}>x \right] = 2\PP\left[A^{(1)}>x\right]
      \end{equation*}
      and we see that
      \begin{equation*}
         \PP\left[R^{(2)}>x\right] \sim 2\PP\left[R^{(1)}>x \right].
      \end{equation*}
      Even though the marginal distributions of the two types of input are exactly the same, the corresponding perpetuities have different tail asymptotic.  
   \end{ex}

   The main result of this paper is closely related to Theorem 4.1 by Maulik and Zwart~\cite{Maulik2006156} where so-called exponential functional of 
   L\'evy process is treated, i. e. a random variable of the form 
   $\int_0^{\infty} {e^{\xi_s} \: \ud s}$ where $\{\xi_s \:| \:  s \geq 0 \}$ is a L\'evy proses with negative drift. 
   Note that this is a perpetuity 
   corresponding to 
   \begin{equation*}
   A=e^{\xi_1} \quad\mbox{and}\quad B= \int_0^1{e^{\xi_s} \: \ud s}.
   \end{equation*}
   The theorem in question states that
   \begin{equation*}
      \PP\left[ \int_0^{\infty}{e^{\xi_s} \: \ud s}>x \right] \sim -\frac{1}{\EE[\xi_1]}\int_{\log(x)}^{\infty} { \PP[\xi_1>y] \: \ud y}
   \end{equation*}
   if $x \mapsto \int_x^{\infty}{ \PP[\xi_1>y] \: \ud y}$ is subexponential. 
   We see that Theorem 4.1 by Maulik and Zwart~\cite{Maulik2006156} is a particular case of the main result of this paper. Next example shows the importance of 
   second condition in \eqref{Tail}. 
%This assumption is needed to ensure that the stationary distribution has right unbounded support.

   \begin{ex}
      Consider the input $(A,B)$ where $B=\ind_{[0,1]}(A)-A$. Assume that $A>0$ and $\EE[\log(A)]<0$. This ensures the existence of the solution $R$ to
      \begin{equation*}
         R \stackrel{d}{=} AR +\ind_{[0,1]}(A) -A \qquad R \mbox{ independent of } A.
      \end{equation*}
      We see that $\PP[B>0] = \PP[A \in [0,1)]>0$, but the solution is bounded. Indeed, notice that $R$ also satisfies 
      \begin{equation*}
         R-1 \stackrel{d}{=}A(R-1) + \ind_{[0,1]}(A) -1 \qquad R \mbox{ independent of } A
      \end{equation*}
      and so $R-1$ is a perpetuity obtained from the input $(A, \ind_{[0,1]}(A) -1)$. Since $\ind_{[0,1]}(A)-1 \leq 0$ a.s., we know that $R-1\leq 0$ a.s.
      Whence we can conclude that the perpetuity $R$ obtained from the input $(A,B)$ is bounded above by $1$ a.s. This is due to the fact that in this case 
      \begin{equation*}
         \PP[A>x, \: B \leq -x]  = \PP[A>x] = \PP[A\vee B >x] \qquad \mbox{for } x>1.
      \end{equation*}
   \end{ex}

   Theorem \ref{MainThm} is also related to results from \cite{Grin1975,Grey1994,rivero2012,InterplayJinzhu} where arising perpetuities exhibit the tail behaviour 
   similar to the tail behaviour of the input. The first one, for example, says that 
   \begin{equation*}
      \frac{\PP[R>x]}{\PP[B>x]} \sim  \frac{1}{1-\EE[A^{\alpha}]}
   \end{equation*}
   if $\EE[A^{\alpha}]<1$ and $\PP[B>x]\sim x^{-\alpha}L(x)$ for some slowly varying function $L$ and $\alpha >0$.
   We see that when $\alpha \to 0$ the constant $(1-\EE[A^{\alpha}])^{-1}$ tends to infinity. % and the above relation is no longer valid. 
   Theorem \ref{MainThm} corresponds to the case with $\alpha =0$ and tells us what is the proper 
   asymptotic. This also gives the reason for the blowup of the constant. 
   By Remark~\ref{RemarkSlow}, $\PP[A\vee B >x] = o(\PP[R>x])$ and we can write 
   \begin{equation*}
      \frac{\PP[R>x]}{\PP[B>x]} \geq \frac{\PP[R>x]}{\PP[A\vee B>x]} \to \infty \qquad \mbox{as } x \to \infty.
   \end{equation*}
   \bigskip

\begin{comment}
   Theorems \ref{MainThm} and \ref{PropFin} are related to the main result from \cite{InterplayJinzhu} that dealt with both finite and infinite time horizon. The result says if $A$ and $B$ 
   are independent and every convex combination of distributions of $A$ and $B$ is strongly regularly varying then
   \begin{equation*}
      \PP[U_N>x] \sim a^{(\alpha)}_N\PP[A>x] + b^{(\alpha)}_N\PP[B>x]
   \end{equation*}
   and 
   \begin{equation*}
        \PP[W_N>x] \sim c^{(\alpha)}_N\PP[A>x] + b^{(\alpha)}_N\PP[B>x] 
   \end{equation*}
   where $\alpha \geq 0$ is the index of the variation. Our result coresponds to the case when $\alpha=0$. In particular relations \eqref{FinBUW}, \eqref{FinAU} and \eqref{FinAW} where 
   established in \cite{InterplayJinzhu} with the assumption that $A$ and $B$ are independent. Theorem \ref{MainThm} corresponds to the case when $N= \infty$ and $\alpha =0$ which 
   is not treated in \cite{InterpleyJinzhu}.\bigskip
\end{comment}

\subsection{Idea of the proof}
   The key problem is to understand the random difference equation, i. e. the case 
   $\Psi(t) = At+B$. 
   For simplicity, we will focus on that case in the following discussion.  
   The~convolution property in Definition \ref{DefSR} of the subexponential distributions says that for 
   $X_1$ and $X_2$ independent with the same distribution $F \in \Scal$ it is true that $\PP[X_1+X_2 > x] \sim \PP[X_1\vee X_2 >x]$. It turn's out that the series \eqref{PerpetuityDef} exhibits 
   a~similar phenomena, more precisely we are able to approximate 
   \begin{equation*}
      \PP\left[\sum_{n \geq 0}{B_{n+1} \prod_{j=1}^n{A_j} } >x \right] \quad \mbox{by using} \quad \PP\left[\sup_{n \geq 0}{\left\{B_{n+1} \prod_{j=1}^n{A_j} \right\} } >x \right]. 
   \end{equation*}
   In order to achieve that we apply technique used in \cite{buraczewskiprecise, buraczewski2013}. 
   This technique revolves around the idea of grouping the terms of the series of the same order and investigating the sizes of the groups. 
   Then, after obtaining the above relation, we can interpret random variable $\sup_{n \geq 0}{B_{n+1} \prod_{j=1}^n{A_j} } $ as a~supremum  of a~perturbed 
   random walk and use the 
   known theory, namely Theorem \ref{ZPBZ},  to derive upper bound for the desired tail asymptotic. Next, adapting some classical techniques, used for example in \cite{palmowski2007tail}, 
   we get lower 
   bound for tail asymptotic. Roughly speaking, we find relatively big subsets of $\{R>x\}$ on which we have control over the whole sequence 
   $\left\{B_{n+1} \prod_{j=1}^n{A_j} \right\}_{n \geq 0} $.

\section{Proof}
   In this section we will prove the main result of the paper. Recall that we consider an i.i.d. sequence $\{ (\Psi_n, A_n, B_n, D_n) \}_{n \geq 0}$ such that
   $A_n>0$, $D_n\geq 0$ and
   \begin{equation*}
      A_nt+B_n-D_n \leq \Psi_n(t) \leq A_nt^++B_n^+ +D_n \qquad \mbox{for } n \geq 0 \mbox{ and } t \in \RR.
   \end{equation*}
   Put $(\Psi,A,B,D)=(\Psi_0,A_0,B_0,D_0)$ and let 
   \begin{equation*}
      \mu = - \EE[\log(A)].
   \end{equation*}
   Random walk generated by $\log(A)$ will be very useful, hence define
   \begin{equation}\label{SnDef}
      S_n=\sum_{j=1}^n{\log(A_j)} \qquad \mbox{for } n \geq 0
   \end{equation}
   and
   \begin{equation}\label{BnU}
      \overline{B}_n = (B_n^+ +D_n)\vee 1, \qquad \underline{B}_n = B_n-D_n \qquad \mbox{for } n \geq 0
   \end{equation}
   finally let $\overline{B}=\overline{B}_0$, $\underline{B}=\underline{B}_0$. Notice that \eqref{Tail} implies
   \begin{equation*}
      \PP[A\vee \underline{B}>x] \sim \PP[A \vee B > x] \sim \PP[A \vee \overline{B}>x].
   \end{equation*}
   For $k<n$ define the backward iterations of $\Psi$ by
   \begin{equation*}
      \Psi_{k:n}(t) = \Psi_k \circ \Psi_{k+1} \circ \ldots \circ \Psi_n(t).
   \end{equation*}
   We will use the convention that for $k >n$  $ \Psi_{k:n}(t) = t$.
   For $n \in \NN$ we can put
   \begin{equation*}
      \underline{\Psi}_n(t) = A_nt+\underline{B}_n \qquad \mbox{and} \qquad \overline{\Psi}_n(t)= A_nt^+ + \overline{B}_n
   \end{equation*} 
   and define $\underline{\Psi}_{k:n}$ and $\overline{\Psi}_{k:n}$ in the same manner as $\Psi_{k:n}$. 
   Notice that using this notation and the bounds on $\Psi_n(t)$, we get 
   \begin{equation*}
      \underline{\Psi}_n(t) \leq \Psi_n(t) \leq \overline{\Psi}_n(t)
   \end{equation*}
   and since $\overline{\Psi}$ and $\underline{\Psi}$ are monotone by iteration it gives 
   \begin{equation*}
      \underline{\Psi}_{k:n}(t) \leq \Psi_{k:n}(t) \leq \overline{\Psi}_{k:n}(t).
   \end{equation*}
   In particular
   \begin{equation*}
      \underline{\Psi}_{1:n}(t)= \sum_{k=0}^{n-1}{\underline{B}_{k+1} \prod_{j=1}^k{A_j}} + t \prod_{j=1}^{n}{A_j} \leq \Psi_{1:n}(t)
   \end{equation*}
   and
   \begin{equation*}
      \Psi_{1:n}(t)\leq \sum_{k=0}^{n-1}{\overline{B}_{k+1} \prod_{j=1}^k{A_j}} + t^+ \prod_{j=1}^{n}{A_j} = \overline{\Psi}_{1:n}(t).
   \end{equation*}
   We will use the following lemma quite often. The proof follows the idea presented in \cite{palmowski2007tail}.
\begin{lem}\label{ConeLemma}
      Assume \eqref{Standing} and for $\delta$, $K >0$ consider the sets
      \begin{equation}\label{EnDef}
         E_n=E_n(K, \delta)  = \{ S_j \in (-j( \mu +\delta) -K, -j(\mu -\delta) +K), \: j \leq n  \}
      \end{equation}
      and
      \begin{equation}\label{FnDef}
         F_n= F_n(K, \delta) = \left\{ \left|\underline{B}_j\right| \leq e^{\delta j +K}, \: j \leq n \right\}.
      \end{equation}
      Then the following claim holds
      \begin{equation}\label{ConeProp}
         \forall \delta, \varepsilon >0 \quad \exists K> 0 \quad \PP\left[ \bigcap_{j \geq 1}{(E_j\cap F_j)} \right] \geq 1-\varepsilon.
      \end{equation}
\end{lem}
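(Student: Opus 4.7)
The plan is to split the event by writing
\[
\bigcap_{j \geq 1}(E_j \cap F_j) \;=\; \Bigl(\bigcap_{j \geq 1} E_j\Bigr) \cap \Bigl(\bigcap_{j \geq 1} F_j\Bigr),
\]
and to choose $K$ large enough that each factor has probability at least $1-\varepsilon/2$. Introducing the centered walk $M_n := S_n + n\mu$, a direct rearrangement identifies $E_n$ with $\{|M_j| < j\delta + K,\ j \leq n\}$, so the task reduces to producing linear-in-$j$ envelopes for both $|M_j|$ and $\log^+|\underline{B}_j|$ with an additive slack that can be tuned by choosing $K$.

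For the random walk part, my plan is to invoke the strong law of large numbers. The standing assumptions \eqref{Standing} guarantee $\EE[\log A] = -\mu \in (-\infty, 0)$, so $M_n/n \to 0$ a.s. Hence there is an a.s.\ finite random variable $N_\delta$ with $|M_n| \leq n\delta$ for every $n > N_\delta$. First I would pick $N_0$ large enough that $\PP[N_\delta > N_0] < \varepsilon/4$, and then $K_1$ large enough that $\PP[\max_{n \leq N_0}|M_n| > K_1] < \varepsilon/4$. On the intersection of the complementary events (probability at least $1-\varepsilon/2$) one has $|M_n| \leq n\delta + K_1$ for every $n \geq 1$, which forces inclusion in $\bigcap_{j \geq 1} E_j(K,\delta)$ as soon as $K \geq K_1$.

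For the perturbation part, the relevant input from \eqref{Standing} is $\EE[\log^+|B-D|] = \EE[\log^+|\underline{B}|] < \infty$. Applying the elementary tail-sum bound
\[
\sum_{j \geq 1} \PP[\log^+|\underline{B}| > \delta j] \;\leq\; \delta^{-1}\, \EE[\log^+|\underline{B}|] \;<\; \infty
\]
and the Borel--Cantelli lemma to the i.i.d.\ sequence $\{\log^+|\underline{B}_j|\}_{j \geq 1}$ shows that $\log^+|\underline{B}_j| > \delta j$ for only finitely many $j$ almost surely. Repeating the cut-off argument used for $M_n$, truncating at an index $N_0'$ beyond which the linear envelope holds a.s.\ and absorbing the finitely many early values into a constant $K_2$, yields $\PP[\bigcap_{j \geq 1} F_j(K_2,\delta)] \geq 1-\varepsilon/2$. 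Setting $K := \max(K_1, K_2)$ then secures both inclusions at once.

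I do not anticipate a serious obstacle: the argument is a routine combination of the SLLN (governing the random-walk component) and Borel--Cantelli (governing the perturbation component), the additive constant $K$ serving to absorb the behaviour below the a.s.\ thresholds. The only point requiring any care is coordinating the two truncations so that one $K$ governs both events simultaneously, which is handled trivially by the maximum.
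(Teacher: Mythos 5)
Your proof is correct and rests on the same two facts as the paper's: the strong law of large numbers for $S_n$ and the summability $\sum_{j\ge 1}\PP[\log^+|\underline B|>\delta j]\le\delta^{-1}\EE[\log^+|\underline B|]<\infty$. The main difference is in how the $F_j$ part is packaged. You route through an almost-sure statement via Borel--Cantelli (only finitely many $j$ with $\log^+|\underline B_j|>\delta j$), then introduce a deterministic cut-off $N_0'$ and absorb the finitely many early terms into $K_2$. The paper instead stays purely quantitative: using independence, it writes $\log\PP[F_n]=\sum_{j\le n}\log\bigl(1-\PP[\log|\underline B_j|>\delta j+K]\bigr)$, applies $\log(1-y)\ge-2y$ for $y<1/2$, and bounds the sum by $-2\delta^{-1}\EE[(\log|\underline B|-K)_+]$, which tends to $0$ as $K\to\infty$ uniformly in $n$; combined with the SLLN for $E_n$ and the fact that $E_n\cap F_n$ is decreasing in $n$, this gives the claim in one pass. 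What your version buys is conceptual transparency (two separate a.s.\ envelopes, each truncated at a deterministic time); what the paper's version buys is a single explicit probability estimate uniform in $n$ with no intermediate a.s.\ thresholds. Both are complete proofs -- just be sure, as you noted at the end, to coordinate the truncations via $K:=\max(K_1,K_2)$, and to observe (as you implicitly do) that $\max_{j\le N_0'}\log^+|\underline B_j|$ is a.s.\ finite so that the second truncation constant exists.
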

\begin{proof}
   For $K$ large enough it is true that  $\PP\left[\log\left| \underline{B} \right|>K  \right]< 1/2$ and since for $y \in (0,1/2)$ it holds 
   that $\log(1-y) \geq -2y$, we can write 
   \begin{eqnarray*}
      \log(\PP[F_n]) & =    & \sum_{j=1}^{n}{\log(1-\PP\left[\log\left|\underline{B}_j\right| > \delta j +K \right])} \geq 
                                -2 \sum_{j=1}^n{\PP\left[\log\left|\underline{B}\right|>\delta j +K \right]}   \\ 
                     & \geq & -2 \sum_{j = 1}^{\infty}{ \PP[ \delta^{-1}( \log\left|\underline{B}\right|-K) > j]} \geq -2 \delta^{-1} \EE[(\log\left|\underline{B}\right|-K)_+]
   \end{eqnarray*}
   and so $\PP[F_n] \to 1$ as $K \to \infty$ uniformly with respect to $n$ since 
   \begin{equation*}
      \EE\left[\left(\log\left|\underline{B}\right|-K\right)_+\right]= \EE\left[\left(\log^+\left|\underline{B}\right|-K\right)_+\right]< \infty.
   \end{equation*}
   Combining this fact with the strong law of large numbers
   for the sequence $\{ S_n\}_{n \geq 0}$ we observe that we
   have shown that for any
   $\varepsilon, \delta > 0$ we can always take $K>0$ large enough such that
   \begin{equation*}
      \forall n \quad \PP[E_n \cap F_n] \geq 1-\varepsilon 
   \end{equation*}
   and since the sequence of sets $\{E_n\cap F_n\}_{n \geq 0}$ is decreasing in the sense of inclusion, we can conclude that
   \begin{equation*}
      \PP\left[ \bigcap_{j \geq 1}{(E_j\cap F_j)} \right] \geq 1-\varepsilon
   \end{equation*}
   and hence the proof is complete.
\end{proof}

   Note that the statement of the Lemma~\ref{ConeLemma} remains true if we replace $\underline{B}_j$ by $\overline{B}_j$ in the definition of the set $F_n$.
   The bounds on $\Psi$ imply that we can bound the solution of \eqref{FixedPoint} by two perpetuities, namely
   \begin{equation}\label{R+Def}
      \overline{R}=\sum_{n \geq 0}{\overline{B}_{n+1} \prod_{k=1}^n{A_j}}
   \end{equation}
   and 
   \begin{equation*}
      \underline{R}=\sum_{n \geq 0}{\underline{B}_{n+1} \prod_{k=1}^n{A_j}}.
   \end{equation*}
   The main idea of the proof is to approximate $\PP\left[\overline{R} >e^x \right]$ by using $\PP[M>x]$, where
   \begin{equation*}
      M = \sup_{n\geq 0}\left\{\log \left(\overline{B}_{n+1}\right) + \sum_{j=1}^{n}{\log(A_j)} \right\}.
   \end{equation*}
   Since $\overline{B}_1\geq 1$ we know that $ M > 0$ a.s. 
   Furthermore, we have $e^M \leq  \overline{R}$ and the last series is convergent a.s  by \eqref{Standing}.
   Having introduced this notation, we are ready to prove the main theorem.

\begin{proof}[Proof of the Theorem \ref{MainThm}]
   Fix large $x \in \RR$. The proof consists of five steps. \bigskip

\textit{Step 1: Existence, uniqueness and representation of the stationary distribution.} Note that
   \begin{equation*}
      R_n \stackrel{d}{=} \Psi_{1:n}(R_0)
   \end{equation*}
   so in order to prove that $\{ R_n\}_{n \geq 0}$ converges in distribution, it is sufficient to show that the sequence $\{ \Psi_{1:n}(R_0)  \}_{n \geq 0}$ 
   converges a.s.
   Recall that \eqref{ConditionsOptimal} implies  
   \begin{equation*}
      A_n \leq {\rm Lip}(\Psi_n) 
   \end{equation*}
   also, by the definition \eqref{DefLip} 
   \begin{equation*}
      {\rm Lip}(\Psi_{1:m}) \leq \prod_{j=1}^m{{\rm Lip}(\Psi_j)} \qquad \mbox{for } m \in \NN.
   \end{equation*}
   For $n\geq m$ and $t_1,\: t_2 \in \RR$ we have
   \begin{eqnarray*}
      | \Psi_{1:n}(t_1) - \Psi_{1:m}(t_2)| & \leq & {\rm Lip}(\Psi_{1:m}) | \Psi_{m+1:n}(t_1) - t_2| \leq {\rm Lip}(\Psi_{1:m})\left( |\Psi_{m+1:n}(t_1)| + |t_2|\right)\\
          & \hspace{-2cm} \leq &\hspace{-1cm}  {\rm Lip}(\Psi_{1:m})\left( \overline{\Psi}_{m+1:n}(t_1)\vee |\underline{\Psi}_{m+1:n}(t_1)| + |t_2|\right)\\
          & \hspace{- 2 cm}\leq &\hspace{ -1 cm} {\rm Lip}(\Psi_{1:m}) 
                        \left( \sum_{k=m}^{n-1}{(\overline{B}_{k+1}\vee |\underline{B}_{k+1}|)\prod_{j=m+1}^k{A_j} } + |t_1|\prod_{j=m+1}^n{A_j} +|t_2| \right)\\
          & \hspace{-2 cm} \leq & \hspace{ -1 cm} {\rm Lip}(\Psi_{1:m}) 
                        \left( \sum_{k=m}^{n-1}{(\overline{B}_{k+1}\vee |\underline{B}_{k+1}|)\prod_{j=m+1}^k{{\rm Lip}(\Psi_j)} } + |t_1|\prod_{j=m+1}^n{{\rm Lip}(\Psi_j)} +|t_2| \right)\\
          & \hspace{-2cm}\leq &\hspace{-1cm} \sum_{k=m}^{n-1}{(\overline{B}_{k+1}\vee |\underline{B}_{k+1}|)\prod_{j=1}^k{{\rm Lip}(\Psi_j)} } + 
                        |t_1|\prod_{j=1}^n{{\rm Lip}(\Psi_j)} +|t_2|\prod_{j=1}^m{{\rm Lip}(\Psi_j)}\to 0.
   \end{eqnarray*}
   The first term tends to 0 since the series $\sum_{k\geq 0}{(\overline{B}_{k+1}\vee|\underline{B}_{k+1}|)\prod_{j=1}^k{{\rm Lip}(\Psi_j)} }$ is convergent, 
   for details see \cite{vervaat1979stochastic}, 
   and the last two terms tend to 0 by the strong law of large 
   numbers for the sequence $\{ \log({\rm Lip}(\Psi_n)) \}_{n \geq 0}$.
   If we take $t_1=t_2=R_0$ we see that $\{ \Psi_{1:n}(R_0)  \}_{n \geq 0}$ is convergent and if we take $t_1=0$, $t_2=R_0$ and $n=m$ we see that the limit does not depend on $R_0$, 
   hence the stationary distribution is unique and it is the distribution of random variable
   \begin{equation}
      R = \lim_{n \to \infty}{\Psi_{1:n}(R_0)}.
   \end{equation} 
   For the rest of the proof we will assume that $R$ is given by the limit above. \bigskip
 
\textit{Step 2: Upper bound in \eqref{WeakR}.}
   We claim that
   \begin{equation}\label{MainClaim}
      \PP\left[\overline{R} > e^x, \: M\leq \log(\varepsilon)+ x \right] = \varepsilon^{\gamma/4} O(\PP[M >x])
   \end{equation}
   for $\varepsilon \in (0,1)$  sufficiently small and $\gamma>0$ given in the condition~\eqref{MomentsNew}. 
   To prove \eqref{MainClaim}, we will apply the technique from \cite{buraczewskiprecise, buraczewski2013}. For $k \in \ZZ$ define random set of integers by
   \begin{equation*}
      \Qcal(k):= \left\{ s \in \NN\: \left| \: \overline{B}_{s+1}\prod_{j=1}^s{A_j}  \in \left(e^{-k}e^x,e^{-k+1}e^x\right]\right\}\right..
   \end{equation*}
   Notice that if $M \leq \log(\varepsilon) +x$ then $\Qcal(k) =\emptyset$ for $k$ satisfying $e^{-k}> \varepsilon$. The following inclusion holds 
   \begin{equation}\label{inclusion}
      \left\{ \overline{R}> e^x,\: M \leq \log(\varepsilon)+ x \right\} \subseteq \left\{ \exists k : \: e^{-k} \leq \varepsilon, \: \# \Qcal(k) > \frac{e^k}{5k^2} \right\}.
   \end{equation}
   Indeed, assume that $\overline{R}> e^x$, $M \leq \log(\varepsilon)+ x$  and that for any $k$ such that $e^{-k} \leq \varepsilon$ we have $ \#\Qcal(k) \leq \frac{e^k}{5k^2}$. 
   Since $\Qcal(k)= \emptyset$ for $k$ satisfying $e^{-k} > \varepsilon$, we can write 
   \begin{eqnarray*}
      \overline{R} & = & \sum_{n\geq 0} {\overline{B}_{n+1} \prod_{j=1}^n{A_j}} = \sum_{k \in \ZZ}{ \sum_{ s \in \Qcal(k)}{\overline{B}_{s+1}\prod_{j=1}^s{A_j} } }\\
          & = & \sum_{k \geq - \log(  \varepsilon) }{ \sum_{ s \in \Qcal(k)}{ \overline{B}_{s+1}\prod_{j=1}^s{A_j}  }  } \leq \sum_{k > 0}{ \# \Qcal(k) e^{-k+1}e^x} \leq
                 \sum_{k>0}{ e^x\frac{e}{5k^2}} = \frac{\pi^2e}{30}e^x<e^x.
   \end{eqnarray*}
   This is a contradiction. Using the inclusion (\ref{inclusion}) one gets instantly that 
   \begin{equation} \label{inclusion2}
      \left\{ \overline{R}> e^x, M \leq \log(\varepsilon)+ x \right\}\subseteq 
         \left\{ M\leq\log(\varepsilon)+x,\:\exists k \geq - \log(\varepsilon),\:\# \Qcal(k)>\frac{e^k}{5k^2} \right\}.
   \end{equation}
   Let's focus our interest on the set $\mbox{RHS}(\ref{inclusion2})$. Define the~sequence $\tau(k)= \inf\Qcal(k)$ 
   (we use the convention that $\inf\emptyset =+ \infty$). 
   On the set $\mbox{RHS}(\ref{inclusion2})$ there exists $k \geq -\log(\varepsilon)$ such that $\tau(k) < \infty$ and from the fact that 
   $\tau(k) \in \Qcal(k)$ and $\# \Qcal(k) > \tfrac{e^k}{5k^2}$, we conclude that  
   \begin{equation*}
      \overline{B}_{\tau(k)+1}\prod_{j=1}^{\tau(k)}{A_j}, \quad \overline{B}_{\tau(k) +p+1}\prod_{j=1}^{\tau(k)+p}{A_j}\quad \in 
        \quad (e^{-k}e^x,e^{-k+1}e^x] \quad \mbox{for some } p > \frac{e^k}{5k^2}-1.
   \end{equation*}
   By taking $\varepsilon>0$ sufficiently small we can ensure that $\frac{e^k}{5k^2}-1> \frac{e^{k}}{10k^2}$ for $k \geq -\log(\varepsilon)$. By dividing the two quantities above 
   we obtain that
   \begin{equation}\label{bound1}
      \frac{A_{\tau(k)+1}}{\overline{B}_{\tau(k)+1}} \overline{B}_{\tau(k)+p+1} \prod_{j=\tau(k)+2}^{\tau(k)+p}{A_j}\quad \in \quad (e^{-1},e^{1}) \quad \mbox{for some } p > \frac{e^k}{10k^2}.
   \end{equation}
   The quotient $A_{\tau(k) +1}/\overline{B}_{\tau(k)+1}$ is bounded on the set $\mbox{RHS}(\ref{inclusion2})$, because
   \begin{equation}\label{bound2}
      \frac{A_{\tau(k)+1}}{\overline{B}_{\tau(k)+1}} = \frac{\prod_{j=1}^{\tau(k)+1}{A_j}}{\overline{B}_{\tau(k)+1}\prod_{j=1}^{\tau(k)}{A_j}} 
           \leq \frac{\overline{B}_{\tau(k)+2}\cdot \prod_{j=1}^{\tau(k)+1}{A_j}  }{e^{-k}e^x}
            \leq \frac{e^{M}}{e^{-k} e^x} \leq \frac{\varepsilon e^{x}}{e^{-k}e^x } \leq e^{k}.
   \end{equation}
   Combining bounds in \eqref{bound1} and \eqref{bound2} we can conclude: on the set $\mbox{RHS}(\ref{inclusion2})$ there exists an~integer $k \geq -\log(\varepsilon)$ 
   for which $\tau(k) < \infty$ and
   \begin{equation*}
      \overline{B}_{\tau(k) +p+1} \prod_{j=\tau(k)+2}^{\tau(k)+p}{A_j} > e^{-k-1} \quad \mbox{for some } p > \frac{e^k}{10k^2}.
   \end{equation*}
   Whence
   \begin{equation*}
           \log\left(\overline{B}_{\tau(k) +p+1}\right) + \sum_{l=\tau(k)+2}^{\tau(k)+p}{\frac{\mu}{2}+ \log( A_l)  } > \frac{\mu}{2}\left(\frac{e^k}{10k^2}-1\right) -k-1 
   \end{equation*}
   from which me may infer that 
   \begin{equation*}
      M_k^*=\sup_{j\geq\tau(k)+2} \left\{\log\left(\overline{B}_{ j}\right)+\sum_{l=\tau(k)+2}^{j-1}{\frac{\mu}{2}+\log(A_l)}\right\} > 
            \frac{\mu}{2}\left(\frac{e^k}{10k^2}-1\right) -k-1>e^{k/2} 
   \end{equation*}
   if $\varepsilon$ is small enough (recall that $k > - \log(\varepsilon)$). 
   So the following inclusion is also correct 
   \begin{equation}\label{InclusionCorrect}
      \left\{ \overline{R}> e^x, \: M\leq \log(\varepsilon)+ x  \right\} \subseteq 
          \bigcup_{ k\geq - \log(\varepsilon)} \left\{\tau(k)<\infty, \: M_k^*> e^{k/2}\right\}.
   \end{equation}
   Notice that by the strong Markov property, conditioned in $\tau(k)$, the distribution of
   $M_k^*$ is the same as the distribution of 
   \begin{equation*}
      M^*=\sup_{j\geq 2} \left\{\log\left(\overline{B}_{ j}\right)+\sum_{l=2}^{j-1}{\frac{\mu}{2}+\log(A_l)}\right\}. 
   \end{equation*}
   Theorem~\ref{ZPBZ} says that
   \begin{equation}\label{TailMB*}
       \PP\left[M^*>x\right] \sim \frac{2}{\mu} \int_x^{\infty}{ \PP[\log(A\vee B) > y] \: \ud y}\sim \frac{2}{\mu}\Fbar_I(x)\leq c_1x^{-\gamma},
   \end{equation} 
   for some $c_1>0$.
   In terms of probability~\eqref{InclusionCorrect} yields
   \begin{multline*}
      \PP\left[\overline{R}>e^x,\: M \leq \log(\varepsilon) +x\right] %\leq \\ 
      \leq \sum_{ k \geq - \log( \varepsilon)  }{ \PP\left[\tau(k)<\infty, \: M_k^* > e^{k/2}\right]} \\
      = \sum_{k\geq-\log(\varepsilon) }{ \PP\left.\left[ M_k^* >e^{k/2}  \right| \: \tau(k) <\infty\right] \PP\left[\tau(k) <\infty\right]}  
      = \sum_{k \geq -\log( \varepsilon)}{ \PP\left[ M^*> e^{k/2}\right] \PP\left[\tau(k) <\infty\right]}
   \end{multline*}
   by the strong Markov property of the sequence $\{ (A_n,\overline{B}_n)\}_{n \geq 0}$.
   Using \eqref{TailMB*} we obtain for $\eta > 0$
   \begin{eqnarray*}
      \PP\left[\overline{R}>e^x,\: M\leq \log(\varepsilon)+x\right] & \leq & c_1\sum_{k\geq-\log(\varepsilon)}{\PP[\tau(k)< \infty]} e^{-k\gamma/2}
         \leq   c_1\hspace{-0.2 cm}\sum_{k\geq-\log(\varepsilon)}{\PP[M> x-k]e^{-k\gamma/2}} \\
         & \leq&  c_1 \hspace{-0.6 cm }\sum_{x-\eta\geq k\geq-\log(\varepsilon)}{\PP\left[M> x-k\right]e^{-k\gamma/2}} + c_1 \hspace{-0.1 cm}\sum_{k >x-\eta}{\PP[M> x-k]e^{-k\gamma/2}}  \\
         & =:  & c_1 I_1(x)+c_1I_2(x).
   \end{eqnarray*}
   Now we will investigate $I_1$ and $I_2$ separately. From the Theorem~\ref{ZPBZ} we can conclude that the distribution of the random variable $M$ belongs to the class 
   $\Scal \subseteq \Lcal$ and so we can use
   Potter bounds (Corollary \ref{Potter}) for $\PP\left[M>t\right]$  to find $\eta> 0$,  such that for $t,s>\eta$ we have
   \begin{equation*}
      \frac{\PP\left[M>t\right]}{\PP\left[M>s\right]} \leq 2 \exp\left\{\gamma\frac{|t-s|}{4}\right\}.
   \end{equation*}
   Then for $x>\eta-\log(\varepsilon)$ we have 
   \begin{equation*}
      \frac{I_1(x)}{\PP\left[M>x\right]} = \sum_{x-\eta\geq k\geq-\log(\varepsilon)}{\frac{\PP\left[M> x-k\right]}{\PP\left[M>x\right]}e^{-\gamma k/2}} \leq 
          2\sum_{x-\eta\geq k\geq-\log(\varepsilon)}{e^{-\gamma k/4}} 
        \leq C \varepsilon^{\gamma/4}
   \end{equation*}
   and for the second term
   \begin{equation*}
      I_2(x) \leq \sum_{k >x-\eta}{e^{-k\gamma/2}} \leq c_2 e^{-x\gamma/2} = o\left(\PP\left[M>x\right]\right)
   \end{equation*}
   for some $c_2>0$, since the distribution of $M$ is long-tailed. Thus claim \eqref{MainClaim} follows. Now we need notice that since $R \leq \overline{R}$ we have
   \begin{equation*}
      \left\{R>e^x\right\} \subseteq \left\{ \overline{R}>e^x,\: M\leq \log(\varepsilon)+ x\right\}\cup \left\{ M>\log(\varepsilon) +x\right\}
   \end{equation*}
   and thus using \eqref{MainClaim}, we get
   \begin{equation*} 
      \frac{\PP[R>e^x]}{\PP[M>x]} \leq \varepsilon^{\gamma/4} \frac{O(\PP[M>x])}{ \PP[M>x]} + \frac{\PP[M>x+\log(\varepsilon)]}{ \PP[M>x]}.
   \end{equation*}
   First let $x \to \infty$ and notice that from Theorem \ref{ZPBZ}
   \begin{equation}\label{TailMB}
       \PP\left[M>x\right] \sim -\frac{1}{ \EE[\log(A)]} \int_x^{\infty}{ \PP[\log(A\vee B) > y] \: \ud y}\sim \frac{1}{\mu}\Fbar_I(x).
   \end{equation}
   From this we can conclude that
   \begin{equation*}
      \limsup_{x \to \infty}  \frac{\PP[R>e^x]}{ \Fbar_I(x)} \leq C\varepsilon^{\gamma/4}  +\frac{1}{\mu}
   \end{equation*}
   for some finite constant $C>0$ independent of $\varepsilon >0$. 
   Since $\varepsilon>0$ is arbitrary small we get the upper bound. \bigskip

\textit{Step 3: Lower bound in \eqref{WeakR}.}
   Fix $0<\varepsilon$ and $ 0<\delta< \frac{\mu}{2}\wedge 1$ . For $K>0$ consider the sets $E_n$ and $F_n$ given by \eqref{EnDef} and \eqref{FnDef} respectively. 
   Choose $K>0$ large enough for \eqref{ConeProp} to be satisfied.
   Consider also the random variables 
   \begin{equation}\label{RnDef}
      R_n^* = \lim_{N \to \infty}{ \Psi_n\circ \ldots \circ \Psi_N(R_0)}.
   \end{equation}
   Note that $R_n^* \stackrel{d}{=}R$ and
   \begin{equation*}
      R= \Psi_{1:n+1}\left(R^*_{n+2}\right).
   \end{equation*}
   Finally put
   \begin{equation*}
      G_n = E_n\cap F_n \cap \left\{ A_{n+1} \vee \underline{B}_{n+1}> e^{n(\mu +\delta)+ L+K +x}, \: \underline{B}_{n+1} \geq -e^{n(\mu-\delta)-K+ x }  \right\} \cap 
        \left\{ R^*_{n+2} > \delta  \right\}
   \end{equation*}
   where $L >0$ is a constant independent of $x$ and $n$.
   We see that the sets $\{ G_n\}_{n \geq 0}$ are disjoint if we take $L=L(K, \delta, \mu)$ sufficiently large. Moreover on the set $G_n$ we have
   \begin{eqnarray*}
      R & = & \Psi_{1:n+1}(R_{n+2}^*)\geq \underline{\Psi}_{1:n+1} (R^*_{n+2})  = 
                               \sum_{k=0}^{n-1}\underline{B}_{k+1}\prod_{j=1}^k{A_j} +\left(\underline{B}_{n+1} + R^*_{n+2} A_{n+1}\right) \prod_{j=1}^n{A_j} \\
        & \geq & -\sum_{k=0}^{n-1} \left|\underline{B}_{k+1}\right|\prod_{j=1}^k{A_j} +\left( \underline{B}_{n+1}+e^{n(\mu-\delta)-K+x} +A_{n+1}R^*_{n+2}\right) \prod_{k=1}^{n}{A_k}-
               e^{n(\mu-\delta)-K+x}\prod_{j=1}^n{A_j}  \\ 
        & \geq &-\frac{e^{2K}}{1-e^{-\mu+2\delta}}  + \delta\left(A_{n+1}\vee \left(\underline{B}_{n+1} +e^{n(\mu-\delta) -K +x}\right)\right) e^{-n(\mu+\delta)-K}-e^{x} \\
            & \geq &-\frac{e^{2K}}{1-e^{-\mu+2\delta}}  + \delta(A_{n+1}\vee \underline{B}_{n+1}) e^{-n(\mu+\delta)-K}-e^{x} 
                               \geq-\frac{e^{2K}}{1-e^{-\mu+2\delta}} + \delta e^{x+L}-e^{x}  > e^x
   \end{eqnarray*}
   and the last inequality is valid for all $x>0$ and all $n \in\NN$ if $L=L(K,\delta,\mu)$ is sufficiently large. We see that 
   $G_n \subseteq \left\{ R> e^x \right\}$ and this allows us to write
   \begin{multline*}
      \PP[R>e^x]  \geq \sum_{n \geq 0}{\PP[G_n]}\\ 
                  \geq (1-\varepsilon) \sum_{n \geq 0}{\PP\left[ A_{n+1}\vee \underline{B}_{n+1}> e^{n(\mu +\delta)+L+x+K}, \: \underline{B}_{n+1} \geq -e^{x+n(\mu-\delta)-K} \right] 
                                 \PP\left[R^*_{n+2} > \delta\right]  } \\
                  \geq(1-\varepsilon) \PP[R>\delta] \sum_{n \geq 0} \left\{\PP\left[ A\vee \underline{B} > e^{n(\mu +\delta)+L+K +x}   \right] \right. \\ -
                                                                          \left. \PP\left[A>e^{n(\mu +\delta)+L+K+x}, \: \underline{B}<-e^{x+n(\mu-\delta)+K}  \right] \right\}    \\
           %       \leq (1-\varepsilon) \PP[R>\delta] \sum_{n \geq 0}{ \left\{\PP\left[ A\vee \underline{B} > e^{n(\mu +\delta)+L+K +x}   \right]-
           %                                                                \PP\left[A>e^{n(\mu +\delta)+K+x}, \: \underline{B}<-e^{x+n(\mu-\delta)+K}  \right] \right\}  }  \\  
           %       \geq ( 1-\varepsilon) \PP[R>\delta] \sum_{n \geq 0}{ \int_n^{n+1}{\PP\left[ A\vee \underline{B} >e^{x+y(\mu+\delta)+L+K}\right] \: \ud y}-
           %                                                              \int_{n-1}^n{ \PP\left[ A>e^{y(\mu-\delta)+K+x}, \: \underline{B} \leq -e^{x+y(\mu-\delta)+K} \right] \: \ud y }   } \\ 
          %        =     (1-\varepsilon) \PP[R>\delta]\left( \int_0^{\infty} { \PP\left[A\vee \underline{B}>e^{x+y(\mu+\delta)+L+K}\right] \: \ud y} -
          %          \int_{-1}^{\infty}{ \PP\left[ A>e^{y(\mu-\delta)+K+x}, \: \underline{B} \leq -e^{x+y(\mu-\delta)+K} \right] \: \ud y }   \right)  \\ 
                  \sim \frac{(1-\varepsilon)\PP[R>\delta]}{\mu+\delta} \int_x^{\infty}{ \PP[\log(A\vee B) > y] \: \ud y}.
   \end{multline*}
   This yields 
   \begin{equation*}
      \liminf_{x \to \infty}{ \frac{\PP\left[R>e^x\right]}{\int_x^{\infty}{ \PP[\log(A\vee B) > y] \: \ud y} }} \geq \frac{(1-\varepsilon)\PP[R>\delta]}{\mu +\delta}.
   \end{equation*}
   If we allow $\varepsilon, \delta \to 0$ we see that we have proven the lower estimate for the desired limit. \bigskip

\textit{Step 4: The case $\PP[A>x] = o(\PP[B>x])$.}
   Firstly, notice that we need only to prove the lower bound and that in this case Theorem~\ref{ZPBZ} yields 
   \begin{equation}\label{TailMBB}
       \PP\left[M >x\right] \sim -\frac{1}{\EE[\log(A)]}\int_x^{\infty}{ \PP[\log^+(B) > y] \: \ud y}.
   \end{equation}
   For $0<\varepsilon$, $0< \delta < \mu/2$ and $K>0$ consider the sets $E_n$ and $F_n$ given by \eqref{EnDef} and \eqref{FnDef} respectively with $K>0$ large enough for \eqref{ConeProp} to 
   be satisfied. 
%   \begin{equation*}
%      E_n^*=E^*_n(K, \delta)  = \{ S_j-S_{n-1} \in (-(j-n+1)( \mu +\delta) -K, -(j-n+1)(\mu -\delta) +K), \: j \geq n  \}
%   \end{equation*}
%   and
%   \begin{equation*}
%      F_n^*= F^*_n(K, \delta) = \left\{ \left|\underline{B}_j\right| < e^{\delta j +K}, \: j \geq n \right\}.
%   \end{equation*}
%   Now notice that by \eqref{ConeProp} and Markov property of the sequence  $\{S_n\}_{n \geq 0}$
%   \begin{equation*}
%      \PP\left[E^*_n \cap F^*_n\right] = \PP\left[\bigcap_{j \geq 1} (E_j\cap F_j) \right] \geq 1 -\varepsilon .
%   \end{equation*}
   Finally, put
   \begin{equation*}
      J_n = E_n\cap F_n \cap \left\{ \underline{B}_{n+1}>e^{x+n(\mu+\delta) +K+L},\: A_{n+1}\leq e^{n(\mu -\delta)-K +x}  \right\} \cap \left\{ |R_{n+2}^*| \leq \delta^{-1} \right\}.
   \end{equation*}
   For some large $L>0$ independent of $x$.  We see that the sets $\{ J_n\}_{n \geq 0}$ are disjoint. Moreover on the set $J_n$ we have
   \begin{eqnarray*}
      R & =    & \Psi_{1:n+1}(R_{n+2}^*) \geq \underline{\Psi}_{1:n+1} (R^*_{n+2}) = 
                                 \sum_{k=0}^{n-1}\underline{B}_{k+1}\prod_{j=1}^k{A_j} +\underline{B}_{n+1}\prod_{j=1}^n{A_j}+ R^*_{n+2} A_{n+1} \prod_{j=1}^n{A_j} \\
        & \geq & -\sum_{k=0}^{n-1} \left|\underline{B}_{k+1}\right|\prod_{j=1}^k{A_j} +\underline{B}_{n+1}\prod_{j=1}^n{A_j}  - A_{n+1}|R^*_{n+2}| \prod_{k=1}^{n}{A_k} \\ 
        & \geq & -\frac{e^{2K}}{1-e^{-\mu+2\delta}} + e^{x+L}  - \delta^{-1}e^x > e^x
   \end{eqnarray*}
   and the last inequality is valid for all $x>0$ if $L=L(K, \delta, \mu)$ is sufficiently large. Therefore $J_n \subseteq \left\{ R> e^x \right\}$ and this allows us to write
   \begin{multline*}
      \PP[R>e^x] \geq \sum_{n \geq 0}{\PP[J_n]}\\  \geq 
     (1-\varepsilon) \sum_{n \geq 0}{\PP\left[ \underline{B}_{n+1}>e^{x+n(\mu+\delta) +K+L},\: A_{n+1}\leq e^{n(\mu -\delta)-K +x}   \right] \PP\left[|R_{n+2}^*| \leq \delta^{-1} \right]}\\\geq
     %(1-\varepsilon)^2 \sum_{n \geq 0}{\PP\left[ \underline{B}>e^{x+n(\mu+\delta) +K+L},\: A\leq e^{n(\mu -\delta)-3K +x}   \right]} \\ \geq
     (1-\varepsilon) \PP\left[|R|\leq \delta^{-1}\right] \sum_{n \geq 0}{\left\{\PP\left[ \underline{B}>e^{x+n(\mu+\delta)+K+L}\right]-\PP\left[A> e^{n(\mu -\delta)-K+x}\right]\right\}} \\%\geq 
%     (1-\varepsilon)^2 \sum_{n \geq 0}{ \left\{\int_n^{n+1}{\PP\left[\underline{B}>e^{x+y(\mu+\delta)+K+L}\right]\:\ud y}-\int_{n-1}^n{\PP\left[A>e^{y(\mu-\delta)-3K +x} \right]} \: \ud y\right\} } \\ =
 %    (1-\varepsilon)^2 \left(\int_0^{\infty}{\PP\left[\underline{B}>e^{x+y(\mu+\delta) +K+L}\right] \: \ud y}- \int_{-1}^{\infty}{\PP\left[ A> e^{y(\mu -\delta)-3K +x}   \right] \: \ud y}\right)  \\=
  \sim   \frac{(1-\varepsilon)\PP\left[|R|\leq \delta^{-1} \right]}{\mu+\delta} \int_x^{\infty}{ \PP[\log^+(B) > y] \: \ud y}. %+ o \left(\int_x^{\infty}{ \PP[\log^+(B) > y] \: \ud y} \right).
   \end{multline*}
   This yields 
   \begin{equation*}
      \liminf_{x \to \infty}{ \frac{\PP\left[R>e^x\right]}{ \int_x^{\infty}{ \PP[\log^+(B) > y] \: \ud y} }} \geq \frac{(1-\varepsilon) \PP\left[|R|\leq \delta^{-1} \right]}{\mu +\delta}.
   \end{equation*}
   If we allow $\varepsilon, \delta \to 0$ we get the lower estimate.\bigskip

\textit{Step 5: The case $\PP[B>x] = o(\PP[A>x])$.} 
   Notice that we only need to prove the upper estimate and that in this case
   \begin{equation}\label{TailMBA}
      \PP\left[ M >x \right]  \sim -\frac{1}{ \EE[\log(A)]} \int_x^{\infty}{ \PP[\log(A) > y] \: \ud y}.
   \end{equation}
   Fix $\varepsilon \in (0.1)$ and notice that since \eqref{MainClaim} holds we only need to focus on the set $\mbox{LHS}(\ref{TheOtherSet})$:
   \begin{equation}\label{TheOtherSet}
      \left\{R>e^x, M > \log(\varepsilon)+ x  \right\} \subseteq \left\{ M \in ( \log(\varepsilon)+x, x]  \right\}\cup 
            \left\{R>e^x, M > x  \right\}
   \end{equation}
   and since the distribution of $M$ is long-tailed and \eqref{TailMBA} is valid we have 
   \begin{equation}\label{MBLongTail}
      \PP\left[M \in ( \log(\varepsilon)+ x, x] \right] = o\left(\int_x^{\infty}{ \PP[\log(A) > y] \: \ud y} \right).
   \end{equation}
   For the other set we have 
   \begin{equation*}
      \left\{R>e^x, M > x  \right\} = \left\{ M>  x \right\} \setminus \left\{R\leq e^x, M > x  \right\}
   \end{equation*}
   so by \eqref{TailMBA} now we only need to prove that
   \begin{equation*}
      \liminf_{x \to \infty} \frac{\PP \left[R\leq e^x, M > x \right]  }{ \int_x^{\infty}{ \PP[\log(A > y] \: \ud y} } \geq -\frac{ \PP[R \leq 0]}{\EE[\log(A)]}.
   \end{equation*}
   We achieve that using the same technique, but this time we consider the sets
   \begin{equation*}
      H_n = E_n\cap F'_n \cap \left\{ \overline{B}_{n+1}\leq \frac{1}{2}e^{x +n(\mu-\delta) -K},\: A_{n+1}> e^{n(\mu +\delta)+K +x}  \right\} \cap \{ R^*_{n+2} \leq 0  \},
   \end{equation*}
   where
   \begin{equation*}
      F_n'=F_n'(\delta,K)=\left\{\left| \overline{B}_{j} \right| < e^{\delta j +K}, j \leq n  \right\}.
   \end{equation*}
   Note that \eqref{ConeProp} also holds true if we replace $F_n$ by $F_n'$.
   We see that the sets $\{ H_n\}_{n \geq 0}$ are disjoint if $x$ is sufficiently large. Moreover on the set $H_n$ we have
   \begin{eqnarray*}
      R & = & \Psi_{1:n+1}(R^*_{n+2})\leq \overline{\Psi}_{1:n+1}(R^*_{n+2})  =
          \sum_{j=0}^{n-1}\overline{B}_{j+1}\prod_{k=1}^j{A_k} + \overline{B}_{n+1}\prod_{k=1}^n{A_k}  
          +(R^*_{n+2})^+ \prod_{k=1}^{n+1}{A_k} \\  & \leq&  \frac{e^{2K}}{1-e^{-\mu+2\delta}} +\frac{1}{2}e^{x} + 0 \leq e^x
   \end{eqnarray*}
   and the last inequality is valid for all $x >x_0 =x_0(K,\delta,\mu)$.
   Therefore $H_n \subseteq \left\{ R \leq e^x \right\}$. Moreover on the set $H_n$
   \begin{equation*}
      M \geq \sum_{k=1}^{n+1}{\log(A_k)} > -n (\mu +\delta) -K +n(\mu +\delta)+K+x =x
   \end{equation*}
   and this proves that $H_n \subseteq \left\{ R\leq e^x, \: M>x  \right\}$, which allows us to write
   \begin{multline*}
     \PP\left[R\leq e^x, M>x\right] \geq \sum_{n \geq 0}{\PP[H_n]} \\ \geq 
     (1-\varepsilon) \sum_{n \geq 0}{\PP\left[ \overline{B}_{n+1}\leq \frac{1}{2}e^{x+n(\mu-\delta)-K},\: A_{n+1}> e^{n(\mu +\delta)+K+x}   \right] \PP[\overline{R}_{n+2} \leq 0 ]  } \\ 
     %(1-\varepsilon) \PP[R \leq 0] \sum_{n \geq 0}{\PP\left[ \overline{B}\leq \frac{1}{2}e^{x+n(\mu-\delta)},\: A > e^{n(\mu +\delta)+K +x}   \right]}   \geq \\ \geq
     \geq (1-\varepsilon) \PP[R \leq 0] \sum_{n \geq 0}{\left\{\PP\left[ A>e^{x+n(\mu+\delta)+K}\right]- \PP\left[ \overline{B}> \frac{1}{2}e^{n(\mu -\delta)+x-K}   \right]\right\}} \\  
     %(1-\varepsilon) \PP[R \leq 0] \sum_{n \geq 0}{\left\{\int_n^{n+1}{\PP\left[A>e^{x+y(\mu+\delta)+K}\right]\:\ud y}-
     %                                             \int_{n-1}^n{\PP\left[\overline{B}>\frac{1}{2}e^{y(\mu-\delta)+x}\right]}\:\ud y\right\}}= \\ =
     %(1-\varepsilon) \PP[R \leq 0] \int_0^{\infty} { \PP\left[ A>e^{x+y(\mu+\delta)+K}\right] \: \ud y}- \int_{-1}^{\infty}{\PP\left[ \overline{B}> \frac{1}{2}e^{y(\mu -\delta)+x}   \right] \: \ud y} = \\=
     \sim \frac{(1-\varepsilon)\PP[R \leq 0]}{\mu+\delta}\int_x^{\infty}{ \PP[\log(A) > y] \: \ud y}.% + o \left(\int_x^{\infty}{ \PP[\log(A) > y] \: \ud y} \right).
   \end{multline*}
   This yields 
   \begin{equation}\label{JustProved}
      \liminf_{x\to\infty}{\frac{\PP\left[R\leq e^x, M>x \right]}{\int_x^{\infty}{ \PP[\log(A) > y] \: \ud y} }} \geq \frac{(1-\varepsilon)\PP[R \leq 0]}{\mu +\delta}.
   \end{equation}
   So if we put everything together, we notice that since $R \leq \overline{R}$ we have
   \begin{multline*}
      \{R>e^x\}\subseteq\left\{\overline{R}>e^x,\: M\leq\log(\varepsilon)+x\right\}\cup\left\{M\in(\log(\varepsilon)+x,x]\right\}\cup \\ 
           \cup \left( \left\{ M > x  \right\}\setminus \left\{R\leq e^x,M > x  \right\} \right)
   \end{multline*}
   and thus
     \begin{equation*}
      \PP[R>e^x] \leq \PP\left[\overline{R}>e^x,\: M\leq\log(\varepsilon)+x\right]+\PP\left[M\in(\log(\varepsilon)+x,x]\right]+ %\\ 
            \PP\left[ M > x  \right] - \PP\left[R\leq e^x,M > x  \right]
   \end{equation*} 
   and so using \eqref{MainClaim}, \eqref{MBLongTail}, \eqref{TailMBA} and \eqref{JustProved} we get
   \begin{equation*}
     \limsup_{x \to \infty}{ \frac{\PP\left[R > e^x \right]}{\int_x^{\infty}{ \PP[\log(A) > y] \: \ud y} }} \leq C\varepsilon^{\gamma/4} + 0 + \frac{1}{\mu} - 
        \frac{(1-\varepsilon)\PP[R\leq 0]}{\mu + \delta}.
   \end{equation*}
   If we allow $\varepsilon, \delta \to 0$ we see that we achieved the desired upper bound and hence the proof is complete. 
\end{proof}

   Now we can turn our attention to the finite time horizon. Notice that Theorem~\ref{PropFin} follows by induction form the following lemma.

   \begin{lem}
      Assume \eqref{ConditionsOptimal}, \eqref{Standing}, \eqref{Tail} and that $F$ defined by \eqref{FDef} is subexponential. 
      Assume additionally that
      \begin{equation*}
         w_1=\liminf_{x\to \infty}\frac{\PP[R_0>x]}{\PP[A \vee B > x]}\leq \limsup_{x\to \infty}\frac{\PP[R_0>x]}{\PP[A \vee B > x]}=w_2
      \end{equation*}
      for some finite constants $w_1,w_2 \geq 0$. Then, for $R_1 = \Psi_1(R_0)$
      \begin{equation}\label{WeakFinLemm}
         w_1+\PP[R_0>0] \leq\liminf_{x\to\infty}\frac{\PP[R_{1}>x]}{\PP[A \vee B > x]}\leq\limsup_{x \to\infty}{\frac{\PP[R_{1}>x]}{\PP[A \vee B >x ]}} \leq 1 + w_2.
      \end{equation}
      Furthermore if
      \begin{itemize}
         \item $\PP[A>x] = o (\PP[B>x])$ then
               \begin{equation}
         w_1+1\leq\liminf_{x\to\infty}\frac{\PP[R_{1}>x]}{\PP[ B > x]}\leq\limsup_{x \to\infty}{\frac{\PP[R_{1}>x]}{\PP[ B >x ]}} \leq 1 + w_2.
               \end{equation}

         \item $\PP[B>x] = o (\PP[A>x])$ then
               \begin{equation}
      w_1+\PP[R_0>0]\leq\liminf_{x\to\infty}\frac{\PP[R_{1}>x]}{\PP[A  > x]}\leq\limsup_{x \to\infty}{\frac{\PP[R_{1}>x]}{\PP[A  >x ]}} \leq \PP[R_0>0]+ w_2.
               \end{equation}

      \end{itemize}       
   \end{lem}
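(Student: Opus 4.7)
The strategy is to leverage the Lipschitz sandwich
\[
A_1 R_0 + \underline{B}_1 \;\le\; R_1 = \Psi_1(R_0) \;\le\; A_1 R_0^+ + \overline{B}_1
\]
from \eqref{ConditionsOptimal}, together with the observation that since $F \in \Scal \subseteq \Lcal$, the function $\tilde F(y):=\PP[A \vee B > y] = \Fbar(\log y)$ is slowly varying and hence itself subexponential.

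For the upper bound $\limsup \PP[R_1>x]/\tilde F(x) \leq 1 + w_2$, the plan is to dominate $R_1 \le A_1 R_0^+ + \overline{B}_1 \le 2(A_1 \vee \overline{B}_1)(R_0^+ \vee 1)$ and pass to logarithms. The two summands $\log(A_1 \vee \overline{B}_1)$ and $\log(R_0^+ \vee 1)$ are non-negative and independent, since the first depends only on the one-step input and the second only on $R_0$; by \eqref{Tail} the former has tail $\sim \Fbar$, while the assumption on $R_0$ gives $\limsup \PP[\log(R_0^+\vee 1)>t]/\Fbar(t) = w_2$. Proposition \ref{PropFoss}, applied via stochastic dominance to accommodate the limsup rather than exact equivalence on the $R_0$ side, then yields $\PP[\log((A_1 \vee \overline{B}_1)(R_0^+ \vee 1)) > t] \leq (1 + w_2 + o(1))\Fbar(t)$, which on substitution produces the desired upper bound.

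For the lower bound $\liminf \PP[R_1>x]/\tilde F(x) \geq w_1 + \PP[R_0>0]$, I would imitate Step 3 of the proof of Theorem \ref{MainThm} by exhibiting disjoint favorable events on which $R_1 > x$: (i) $\{A_1 > e^L x,\: R_0 > \delta,\: \underline{B}_1 > -e^{-L}x\}$, where $A_1 R_0$ is the dominating term; (ii) $\{R_0 > e^L x,\: A_1 \ge \delta,\: \underline{B}_1 > -\delta x\}$, where $R_0$ itself is large; and (iii) a supplementary event isolating a large $\underline{B}_1$ against a bounded $R_0$. Independence of $R_0$ from $(A_1,\underline{B}_1)$ factors each probability, the cross term $\PP[A_1 > u,\underline{B}_1 \le -u] = o(\tilde F(u))$ from the second clause of \eqref{Tail} controls the unpleasant pieces, and slow variation of $\tilde F$ absorbs the multiplicative shifts. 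After letting $\delta \to 0$ and $L \to \infty$, (i) contributes $c_A\PP[R_0>0]\tilde F(x)$ and (ii) contributes $w_1 \tilde F(x)$, where $c_A = \liminf \PP[A>x]/\tilde F(x)$; contribution (iii), combined with $\PP[A>x]+\PP[B>x] \ge \PP[A\vee B>x]$, supplies what is missing from (i) so that the sum is bounded below by $(w_1 + \PP[R_0>0])\tilde F(x)$. The two subcases then follow by substituting $c_A = 0$ or $c_B = 0$, which simultaneously sharpens the dominating inequality for the upper bound and the disjoint events for the lower bound. The main obstacle is the joint handling of $(A_1, B_1, D_1)$ in the lower bound, particularly the uniform control of cross-events of the form $\{A_1 \text{ very large},\,\underline{B}_1 \text{ very negative}\}$ via \eqref{Tail} combined with Potter's Corollary \ref{Potter} for the slowly-varying dominated-convergence steps underlying the Breiman-type product-tail asymptotics.
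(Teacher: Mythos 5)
Your strategy for the general upper bound (dominate $R_1$ by a product of an $(A_1,\overline B_1)$-factor and an $R_0$-factor, pass to logarithms, invoke Proposition~\ref{PropFoss} with a stochastic--domination device to cope with the $\limsup$ rather than an exact equivalence) and for the general lower bound (disjoint favourable events factorising by independence of $R_0$ from the one-step input, slow variation to absorb shifts, and the second clause of \eqref{Tail} to kill cross-events) is essentially the route the paper takes. The paper's version is a little cleaner in that it works directly with $A_1\vee\underline B_1$ rather than splitting into $A_1$-large and $\underline B_1$-large sub-events and bookkeeping a constant $c_A=\liminf \PP[A>x]/\PP[A\vee B>x]$ (which need not exist as a limit and whose $\liminf$ need not combine additively with $c_B$ along the same subsequence); but that is a matter of efficiency, not correctness.

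There is, however, a genuine gap in the third subcase. When $\PP[B>x]=o(\PP[A>x])$, the claimed upper bound is $\limsup \PP[R_1>x]/\PP[A>x]\le \PP[R_0>0]+w_2$, which is \emph{strictly sharper} than the $1+w_2$ that your general upper-bound argument yields. Writing $R_1\le 2(A_1\vee\overline B_1)(R_0^+\vee 1)$ and applying Proposition~\ref{PropFoss} can only produce the constant $1+w_2$, because the factor $R_0^+\vee 1$ collapses to $1$ (not to something with atom $\PP[R_0\le 0]$ at $-\infty$ in log-scale) and the subexponential convolution contributes the full weight $1$ from the $A_1\vee\overline B_1$ side regardless of the sign of $R_0$. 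Replacing it by $\PP[R_0>0]$ requires a separate argument that subtracts off the event where $A_1$ is large but $R_0\le 0$, on which $A_1R_0^+=0$ and $\overline B_1$ alone is too small to push $R_1$ past $x$ (because $\PP[\overline B_1>x]=o(\PP[A_1>x])$). The paper achieves this in Step 4 by introducing $M_1=\log\overline B_1\vee(\log^+R_0+\log A_1)$, noting $\{\overline\Psi_1(R_0)>e^x\}\subseteq\{M_1>x-\log 2\}$, and then peeling off the set $H_0=\{\overline B_1\le e^x,\ A_1>e^x,\ R_0\le 0\}$, which lies inside $\{M_1>x\}$ but outside $\{R_1>e^x\}$, giving the needed correction term $-\PP[R_0\le 0]\PP[A>e^x]$. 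Your phrase ``substituting $c_B=0$ \ldots sharpens the dominating inequality'' does not supply this; without a lemma of this kind (or an equivalent conditioning argument handling the atom of $R_0^+$ at $0$ before invoking Proposition~\ref{PropFoss}), your proof establishes only $1+w_2$ in this subcase.
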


\begin{proof}
   The proof mimics the one of the main result. Fix $x \in \RR$.\bigskip

\textit{Step 1: Upper bound in \eqref{WeakFinLemm}.} Notice that
   \begin{equation*}
      R_1  =  \Psi_{1}(R_0)  \leq  \overline{\Psi}_{1}(R_0) = \overline{B}_{1} + R^+_0A_1 \leq A_{1}\vee \overline{B}_{1} + R^+_0 (A_1\vee \overline{B}_1) 
                                    \leq  (1+R^+_0)(A_1\vee \overline{B}_1)
   \end{equation*}
   and so
   \begin{equation*}
      \PP[R_1 >x] \leq \PP\left[ (1+R^+_0)(A_1\vee \overline{B}_1) > x \right]\leq (1+w_2+o(1)) \PP[A\vee B > x]
   \end{equation*}
   as $x \to \infty$, since $F$ is subexponential and Proposition~\ref{PropFoss} holds. \bigskip

\textit{Step 2: Lower bound in \eqref{WeakFinLemm}.}
   Fix $0<\varepsilon$ and $ 0<\delta< \frac{\mu}{2}\wedge 1$ . For $K,L>0$ consider the sets 
   \begin{equation*}
      G_0 = \left\{ A_{1} \vee \underline{B}_{1}> e^{L+K +x}, \: \underline{B}_{1}\geq -e^{-K+x}\right\}\cap\left\{ R_0 > \delta  \right\}
   \end{equation*}
   and
   \begin{equation*}
      G_1 = \left\{ e^{-K}\leq A_1 \leq e^K, \: \left|\underline{B}_1\right|\leq e^{K} \right\}\cap \left\{R_0 > e^{L+K +x}  \right\}.
   \end{equation*}
   Take $K>0$ sufficiently large such that
   \begin{equation}\label{ConeOne}
      \PP\left[ e^{-K}\leq A_1 \leq e^K, \: \left|\underline{B}_1\right|\leq e^{K}  \right] \geq 1-\varepsilon.
   \end{equation}
   We see that the sets $G_0$ and $G_1$ are disjoint if we take $L=L(K, \delta, \mu)$ large enough. Moreover on the set $G_0$ we have
   \begin{eqnarray*}
     R_1 & = & \Psi_{1}(R_0)\geq  \underline{\Psi}_{1}(R_0)  
          =                \underline{B}_{1} + A_{1} R_0 \\
     & = & \left(\underline{B}_{1}+e^{-K+x}+A_{1}R_0\right)-e^{-K+x}  
          \geq             \delta(A_{1}\vee \underline{B}_{1}) -e^x \geq \delta e^{x+L} -e^x > e^x
   \end{eqnarray*}
   and the last inequality is valid for all $x>0$  if $L=L(K,\delta,\mu)$ is sufficiently large. 
   On the set $G_1$ we have   
   \begin{eqnarray*}
     R_1 & = & \Psi_{1}(R_0) \geq \underline{\Psi}_{1}(R_0) = \underline{B}_{1} + R_0A_1 
          \geq  -|\underline{B}_{1}|+R_0A_1 \\ 
         & \geq            & -e^K  + R_0 e^{-K} \geq -e^K + e^{x+L}   > e^x
   \end{eqnarray*}
   and again, the last inequality holds if we take $L = L(K,\delta, \mu)$ sufficiently large.
   Therefore $G_0\cup G_1 \subseteq \left\{ R_1> e^x \right\}$ and  since 
   \begin{eqnarray*}
      \PP[G_0] & \geq & \PP\left[ A_{1}\vee \underline{B}_{1}> e^{ L+x+K}, \: \underline{B}_{1}\geq -e^{-K+x} \right] 
                             \PP[ R_0 > \delta]   \\
               & \geq & \PP[ R_0> \delta] \left(\PP\left[ A_{1}\vee \underline{B}_{1}> e^{ L+x+K} \right]\right. 
                      -\left.\PP\left[ A_{1}> e^{ L+x+K}, \: \underline{B}_{1}> -e^{ -K+x} \right] \right)\\
               & \sim & \PP[R_0>\delta]\PP\left[ A\vee B > e^{x}   \right]  
   \end{eqnarray*}
   and
   \begin{equation*}
      \PP[G_1] \geq (1-\varepsilon) \PP\left[R_0 > e^{L+K +x}   \right] \geq  ((1-\varepsilon)w_1+o(1))\PP[A\vee B>e^x].
   \end{equation*}
   We can write 
   \begin{equation*}
      \PP[R_1>e^x] \geq \PP[G_0] +\PP[G_1] \geq \left((1-\varepsilon)  w_1 + \PP[R_0>\delta] + o(1) \right) \PP[A\vee B >e^x].
   \end{equation*}
   This yields 
   \begin{equation*}
      \liminf_{x \to \infty}{ \frac{\PP\left[R_n>e^x\right]}{ \PP[ A\vee B > e^x] }} \geq \left((1-\varepsilon)w_1+\PP[R_{0} > \delta ] \right) .
   \end{equation*}
   If we allow $\varepsilon, \delta \to 0$ we see that we have proven the lower estimate for the desired limit. \bigskip

\textit{Step 3: The case $\PP[A>x] = o(\PP[B>x])$.}
   For $0<\varepsilon$, $0< \delta < \mu/2$ and $K, L>0$ consider the sets 
   \begin{equation*}
      J_0 = \left\{\underline{B}_{1}>e^{x +K+L},\: A_{1}\leq e^{ -K +x}  \right\} \cap \left\{ |R_0| \leq \delta^{-1}  \right\}.
   \end{equation*}
   and 
   \begin{equation*}
      J_1 = \left\{ e^{-K}\leq A_1 \leq e^K, \: \left|\underline{B}_1\right|\leq e^{K} \right\} \cap \left\{ R_0>e^{x+K+L} \right\}
   \end{equation*}    
   with $K$ such that~\eqref{ConeOne} is satisfied. We see that the sets $J_0$ and $J_1$ are disjoint. Moreover on the set $J_0$ 
   \begin{equation*}
      R_{1}  =   \Psi_{1}(R_0) \geq \underline{\Psi}_{1}(R_0) 
             =                 \underline{B}_{1} + R_0A_{1}        
             \geq              \underline{B}_{1} - |R_0|A_{1}   
             \geq              e^{x+L} - \frac{1}{\delta}e^x > e^x
   \end{equation*}
   and on the set $J_1$
   \begin{equation*}
      R_{n}  =   \Psi_{1}(R_0) \geq \underline{\Psi}_{1}(R_0) = \underline{B}_{1} + R_0A_1
             \geq   -|\underline{B}_{1}| + R_0A_1    
             \geq  -e^K + e^{x+L}  > e^x
   \end{equation*}
   and the last inequality is valid for all $x>0$ if $L=L(K, \delta, \mu)$ is sufficiently large. Therefore $J_0\cup J_1 \subseteq \left\{ R> e^x \right\}$.  
   \begin{eqnarray*}
     \PP[J_0]  & \geq & \PP\left[ \underline{B}_{1}>e^{x +K+L},\: A_{1}\leq e^{-3K +x}   \right] \PP\left[|R_0| \leq \delta^{-1}\right]  \\ 
               & =    &  \PP\left[|R_0| \leq \delta^{-1}\right]\left(\PP\left[\underline{B}>e^{x+K+L}\right]- \PP\left[ A> e^{-K +x}\right] \right)   \\ 
               & \geq & (\PP\left[|R_0| \leq \delta^{-1} \right] + o(1) )\PP[B > e^x] 
   \end{eqnarray*}
   and 
   \begin{equation*}
      \PP[J_1] \geq (1-\varepsilon) \PP[R_0 > e^{x+K+L}] \geq ((1-\varepsilon)w_1+o(1))\PP[B>e^x].
   \end{equation*} 
   This allows us to write 
   \begin{equation*}
      \PP[R_1>e^x] \geq \PP[J_0] +\PP[J_1]\geq \left((1-\varepsilon) w_1+\PP\left[|R_0| \leq \delta^{-1}\right]+ o(1)\right) \PP[B>e^x]
   \end{equation*}
   This yields 
   \begin{equation*}
      \liminf_{x \to \infty}{ \frac{\PP\left[R>e^x\right]}{\PP[\log^+(B) > x]} } \geq (1-\varepsilon) \left( w_1 + \PP\left[|R_0| \leq \delta^{-1}\right] \right).
   \end{equation*}
   If we allow $\varepsilon, \delta \to 0$ we get the lower estimate. \bigskip

\textit{Step 4: The case $\PP[B>x] = o(\PP[A>x])$.}
   Notice that we only need to prove the upper estimate. Let
   \begin{equation*}
      M_1 = \left\{ \log(\overline{B}_{1})\right\} \vee \left\{ \log^+(R_0) + \log(A_1) \right\}.
   \end{equation*}
%   Note that similarly as in the case of $W_N$ we have 
%   \begin{equation*}
%      \PP\left[ M_N^{(B \vee 1} > x \right] \leq \PP\left[ \sum_{k=1}^N{ \log^+(A_k \vee B_k)} >x \right] \sim N\PP[\log^+(A\vee B)>x]
%   \end{equation*}
%   since $F$ is subexponential. 
   Next, notice that
   \begin{equation*}
      \left\{ \overline{\Psi}_{1}(R_0)>e^x, M_1 \leq x-\log(2) \right\} = \emptyset
   \end{equation*}
   and so 
   \begin{equation}\label{PartitionWN}
      \begin{array}{rcl}
      \left\{ \overline{\Psi}_{1}(R_0) > e^x\right\} & = &\left\{\overline{\Psi}_{1}(R_0)>e^x,\: M_1 > x-\log(2)\right\} \\
                                                       & = & \left\{ M_1>  x-\log(2) \right\} \setminus \left\{\overline{\Psi}_{1}(R_0)\leq e^x, M_1 > x-\log(2)\right\}.
      \end{array}
   \end{equation}
   Since $M_1\leq \log^+(R_0) + \log(A_1\vee \overline{B}_1)$ we can write
   \begin{equation}\label{MBTailBound}
      \begin{array}{rcl}
      \PP\left[M_1 > x-\log(2)  \right] & \leq  &\PP\left[\log^+(R_0)+\log(A_1\vee \overline{B}_1) >x-\log(2) \right] \\
                                          & \geq  & (w_1+1+o(1)) \PP[\log(A \vee B) > x]
      \end{array}
   \end{equation}
   so we only need to prove that
   \begin{equation*}
      \liminf_{x \to \infty} \frac{\PP \left[\overline{\Psi}_{1}(R_0)\leq e^x, M_1 > x-\log(2) \right]  }{ \PP[\log(A) > x] } \geq \PP[R_0\leq 0].
   \end{equation*}
   We achieve that using the same technique, but this time we consider the set
   \begin{equation*}
      H_0 = \left\{ \overline{B}_{1}\leq e^{x},\: A_{1}> e^{x}  \right\} \cap \{ R_0 \leq 0  \}.
   \end{equation*}
   on which we have
   \begin{equation*}
      \Psi_{1}(R_0)  \leq  \overline{\Psi}_{1}(R_0) = \overline{B}_{1} + R_0^+A_1  \leq  e^{x} + 0 \leq e^x
   \end{equation*}
   and
   \begin{equation*}
      M_1 \geq \log(A_1) > x.
   \end{equation*}
   We see that $H_1 \subseteq \{ \Psi_{1}(R_0)\leq e^x, \: M>x -\log(2) \}$ and this allows us to write
   \begin{multline*}
     \PP\left[\Psi_{1}(R_0)\leq e^x,\: M_1>x - \log(2)\right] \geq \PP[H_0] \geq 
     \PP\left[\overline{B}_{1}\leq e^{x},\: A_{1}> e^{x}\right]\PP[R_0 \leq 0]  \\ 
    \geq \left\{\PP\left[A_{1}> e^{x} \right] -  \PP\left[ \overline{B}_{1}> e^{x}  \right]  \right\} \PP[R_0 \leq 0]  \geq
     \PP[A> e^x] \left( \PP[ R_0 \leq 0 ] +o(1)\right).
   \end{multline*}
   This yields 
   \begin{equation*}
      \liminf_{x \to \infty}{ \frac{\PP\left[R_1\leq e^x, M_1 > x - \log(2)  \right]}{\PP[\log(A) > x] }} \geq \PP[R_{0} \leq 0 ] .
   \end{equation*}
   Putting everything together, that is \eqref{PartitionWN}, \eqref{MBTailBound} and the last inequality we get
   \begin{equation*}
      \limsup_{x \to \infty}{ \frac{\PP\left[R_1> e^x\right]}{\PP[\log(A) > x] }} \leq 1+w_2 -\PP[R_0 \leq 0 ]  
   \end{equation*}
   which is the desired upper bound and hence the proof is complete in this case. 
\end{proof}

\section*{Acknowledgments}
Part of the main result of this paper was included in author’s Master's thesis, written under the~supervision of Dariusz Buraczewski at the University of Wroclaw. 
The author would like to thank him for hours of stimulating conversations and several helpful suggestions during the~preparation of this paper. 
The author would like to thank also Zbigniew Palmowski, Tomasz Rolski and Bert Zwart for pointing out references and fruitful conversations as well as both referees for 
constructive suggestions for improving the presentation of this paper. 

%\section*{References}

\bibliographystyle{amsplain}

\providecommand{\bysame}{\leavevmode\hbox to3em{\hrulefill}\thinspace}
\providecommand{\MR}{\relax\ifhmode\unskip\space\fi MR }
% \MRhref is called by the amsart/book/proc definition of \MR.
\providecommand{\MRhref}[2]{%
  \href{http://www.ams.org/mathscinet-getitem?mr=#1}{#2}
}
\providecommand{\href}[2]{#2}

\end{document}